\documentclass[a4paper,12pt,reqno]{amsart}

\usepackage{amsfonts}
\usepackage{amsmath}
\usepackage{amssymb}
\usepackage{mathrsfs}
\usepackage{nomencl}
\usepackage{delarray}
\usepackage{blkarray}

\usepackage{hhline}
\setlength{\arrayrulewidth}{.5pt}


\usepackage[colorlinks]{hyperref}


\setlength{\textwidth}{15.2cm}  
\setlength{\textheight}{22.7cm}
\setlength{\topmargin}{0mm}
\setlength{\oddsidemargin}{3mm}
\setlength{\evensidemargin}{3mm}  


\numberwithin{equation}{section}
\theoremstyle{plain}

\newtheorem{thm}{Theorem}[section]
\newtheorem{prop}[thm]{Proposition}
\newtheorem{cor}[thm]{Corollary}
\newtheorem{lem}[thm]{Lemma}

\theoremstyle{definition}
\newtheorem{defn}[thm]{Definition}
\newtheorem{assumption}[thm]{Assumption}
\newtheorem{rem}[thm]{Remark}

\newcommand{\inlinemaketitle}{{\let\newpage\relax\maketitle}}
\newcommand{\be}{\begin{equation}}
\newcommand{\ee}{\end{equation}}

\def\eps{\varepsilon}
\def\HS{{\mathtt{HS}}}

\def\TT{{\mathbb T}}
\def\NN{{\mathbb N}}
\def\dualSU2{\frac12\NN_0}
\def\RR{{\mathbb R}}
\def\ZZ{{\mathbb Z}}
\def\C{{\mathbb C}}
\def\Gh{{\widehat{G}}}
\def\dpi{{d_\pi}}
\def\kpi{{k_\pi}}
\def\FT{{\mathscr F}}

\def\B{{\mathcal B}}
\def\M{\mathcal{M}}
\def\Dcal{{\mathcal D}}

\def\p#1{{\left({#1}\right)}}

\def\jp#1{{\left\langle{#1}\right\rangle}}

\def\wt#1{{\widetilde{#1}}}

\def\SU2{{\rm SU(2)}}

\DeclareMathOperator{\Tr}{Tr}

\begin{document}
\title[Net spaces on lattices and Hardy-Littlewood type inequalities]
{Net spaces on lattices, Hardy-Littlewood type inequalities, and their converses}

\author[Rauan Akylzhanov]{Rauan Akylzhanov}
\address{Rauan Akylzhanov:
  \endgraf
  Department of Mathematics
  \endgraf
  Imperial College London
  \endgraf
  180 Queen's Gate, London SW7 2AZ
  \endgraf
  United Kingdom
  \endgraf
  {\it E-mail address} {\rm r.akylzhanov14@imperial.ac.uk}
}

\author[Michael Ruzhansky]{Michael Ruzhansky}
\address{
  Michael Ruzhansky:
  \endgraf
  Department of Mathematics
  \endgraf
  Imperial College London
  \endgraf
  180 Queen's Gate, London SW7 2AZ
  \endgraf
  United Kingdom
  \endgraf
  {\it E-mail address} {\rm m.ruzhansky@imperial.ac.uk}
  }

\thanks{The second
 author was supported in parts by the EPSRC Grant EP/K039407/1 and by the Leverhulme Grant RPG-2014-02.
 No new data was collected or generated during the course of the research.}
\date{\today}

\subjclass[2010]{Primary 35G10; 35L30; Secondary 46F05;}
\keywords{Net spaces, Lie groups, homogeneous manifolds, Hardy-Littlewood inequality}

\begin{abstract}
       We introduce abstract net spaces on directed sets
	and prove their embedding and interpolation properties.
	Typical examples of interest are lattices of irreducible unitary representations of compact Lie groups
	and of class I representations with respect to a subgroup.
	As an application, we prove Hardy-Littlewood type inequalities and their converses
	on compact Lie groups and on compact homogeneous manifolds.
\end{abstract}
\maketitle

\section{Introduction}

In \cite{HL}, Hardy and Littlewood proved the following estimate on the circle $\TT$, relating 
$L^p$-norms of a function and its Fourier coefficients:
\begin{equation}
\label{H_L_inequality-0}
	\sum_{m \in \ZZ}{(1+|m|)}^{p-2}|\widehat{f}(m)|^{p} \leq C\|f\|^p_{L^p(\TT)},\quad 1<p\leq 2.
\end{equation}
They also argued this to be a suitable extension of the Plancherel's identity to the setting of
$L^p$-spaces. In fact, they also proved that the inequality becomes an equivalence provided
that the Fourier coefficients $\widehat{f}(m)$ are monotone. 

By duality, we readily obtain the corresponding inequality also in the range $2\leq p<\infty$,
namely, we also have the estimate
\begin{equation}
\label{H_L_inequality-01}
	\|f\|^p_{L^p(\TT)}\leq C_{p}^{\prime}\sum_{m\in\ZZ} (1+|m|)^{p-2}|\widehat{f}(m)|^p,
	\quad 2\leq p<\infty.
\end{equation}

In this paper we are interested in 
inequalities of Hardy-Littlewood type.
For example, let $1<p<\infty$ and $f\in L^p(\mathbb{T})$, and suppose that 
$$
f \sim \sum_{m\in\mathbb{Z}}\widehat{f}(m)e^{2\pi imx}.
$$
Then it was shown in \cite{NED} that we have
\begin{equation}
\label{necess_T-00}
	\sum^{\infty}_{k=1}k^{p-2}\left(\sup_{\substack{e\in M_0 \\ |e|\geq k}}
	\frac1{|e|}\left|\sum_{m\in e}\widehat{f}(m)\right|\right)^p\leq C \|f\|^p_{L^p(\TT)}, \quad 1<p<\infty,
\end{equation}
where $M_0$ is the set of all finite arithmetic progressions in $\mathbb Z$.
Especially in the range $2\leq p<\infty$ this gives a converse estimate to the Hardy-Littlewood estimate
\eqref{H_L_inequality-01}.

Net function spaces $N_{p,q}$ on $\ZZ^n$ and $\RR^n$  were introduced  in \cite{Nur1998} as a 
machinery to prove the inequalities of type \eqref{necess_T-00} 
for Fourier coefficients of functions on $\mathbb{T}^n$ and $\mathbb{R}^n$. 
Since then, they found other applications as well:
we can refer to \cite{NED,Nur1998,Nursultanov-Tikhonov:JGA-2011} for some applications of
these spaces to questions of harmonic analysis and approximation theory.

Since the unitary dual $\widehat{\TT^n}$ of a compact abelian Lie group $\TT^n$ is isomorphic
to $\ZZ^n$, i.e.
$$
	\widehat{\TT^n}
	=\{e^{2\pi i k\cdot x}\}_{k\in\ZZ^n}\ni e^{2\pi i k\cdot x}
	\longleftrightarrow k\in\ZZ^n,
$$
we can consider $N_{p,q}(\ZZ^n)$ as a net space $N_{p,q}(\widehat{\TT^n})$ on the lattice $\ZZ^n$.
It turns out that the theory of net spaces $N_{p,q}$ can be extended to arbitrary lattices provided 
we make certain rather natural assumptions. 
In this paper we develop this abstract setting to be able to use the notion of a net space on the 
unitary dual of a compact
Lie group and on the lattice of its class I representations. In addition to a suitable definition, 
for our purposes we need to prove their embedding and
interpolation properties. As it is common, 
such technique allows one to derive `strong' estimates from `weak' ones by interpolation.

As an application of these results, we obtain Hardy-Littlewood type inequalities on
compact Lie groups and compact homogeneous manifolds, also providing the inverses to
the Hardy-Littlewood inequalities that were recently obtained in \cite{ANR2015}.
In Corollary \ref{COR:HL-converse-SU2} we calculate an explicit example of such
an inverse (to the Hardy-Littlewwod inequality) in the case of the group SU(2).

The obtained results also yield a noncommutative version to known estimates of the type
\eqref{necess_T-00} on a circle $\TT$.

In Section \ref{SEC:net_space_on_lattices} we develop the notion of net spaces on rather general
lattices and prove their main properties (interpolation and embedding).
In Section \ref{SEC:HL-converse} we apply these results to obtain inverses to
known Hardy-Littlewood inequalities in the settings of compact Lie groups and compact
homogeneous spaces.

\section{Net spaces on lattices}
\label{SEC:net_space_on_lattices}

 Let $\Gamma$ be a discrete set. We assume that there exists such partial order $\prec$ on $\Gamma$ that every two elements in $\Gamma$ are comparable under $\prec$ in $\Gamma$. In addition, we suppose that $\Gamma$ is bounded from below. In other words, there exists an element $1\in \Gamma$ such that $1\prec \pi$ for all 
$\pi\in\Gamma$. The partial order $\prec$ on $\Gamma$ makes it possible to define the notion of a net which was first introduced by Smoth and Moore in \cite{MS1922}. This `net' is different from `net' in net spaces $N_{p,q}$.
Let $T$ be a topological space.  A net $a$ in $T$ is a function from $\Gamma$ to $T$, i.e.
$$
a=\{a_{\pi}\}_{\pi\in\Gamma}\colon \Gamma\ni\pi \mapsto a_{\pi}\in T.
$$

We consider two nets $\delta=\{\delta_{\pi}\}_{\pi\in\Gamma}$ and $\kappa=\{\kappa_{\pi}\}_{\pi\in\Gamma}$ in $T=\NN$, i.e.
\begin{eqnarray*}
\Gamma\ni \pi \mapsto \delta_{\pi}\in\NN,
\\
\Gamma\ni \pi \mapsto \kappa_{\pi}\in\NN.
\end{eqnarray*}

We turn $\Gamma$ into a $\sigma$-finite measure space by introducing a measure 
\begin{equation}\label{EQ:nu}
\nu_{\Gamma}(Q):=\sum\limits_{\substack{\theta\in Q }}\delta_{\theta}\kappa_{\theta},
\end{equation}
where $Q$ is an arbitrary  subset of $\Gamma$. We denote this measure space by $(\Gamma,\nu_{\Gamma})$.
We denote by $\Sigma$ the space of matrix-valued sequences on $\Gamma$ that will be realised via
$$\Sigma:=\left\{ h=\{h(\pi)\}_{\pi\in\Gamma}, 
h(\pi)\in \mathbb{C}^{\kappa_{\pi}\times \delta_{\pi}}\right\}.$$
The $\ell^p$ spaces on $\Sigma$ can be defined, for example, motivated by the Fourier analysis on compact homogeneous spaces (see \cite{RT}), in the form 
$$
\|h\|_{\ell^p(\Gamma,\nu_{\Gamma},\Sigma)}
:=
\left(
\sum\limits_{\pi\in\Gamma}
\delta_{\pi}\kappa_{\pi}^{p(\frac1p-\frac12)}
\|h(\pi)\|^p_{\HS}
\right)^{\frac1p},\quad h\in\Sigma.
$$
Sometimes, we can abbreviate this by writing $\ell^p(\Gamma,\nu_{\Gamma}),\ell^p(\Gamma)$ or $\ell^p$.

If we put $\Gamma=\Gh$, where $\Gh$ is the unitary dual of a compact Lie group $G$, then Fourier transform can be regarded as an operator mapping a function $f\in L^p(G)$ to the matrix-valued sequence $\widehat{f}=\{\widehat{f}(\pi)\}_{\pi\in\Gh}$  of the Fourier coefficients $\widehat{f}(\pi)\in\C^{\dpi\times\dpi}$ given by $\widehat{f}(\pi)=\int\limits_Gf(u)\pi(u)^*\,du$.
Let us denote by $\Gh_0$ the subset of $\Gh$ of representations that are class I with respect to some  subgroup of $G$.
For $\Gamma=\Gh_0$ we put $\delta_{\pi}=\dpi$ and $\kappa_{\pi}=\kpi$,  these spaces thus coincide with the $\ell^p(\Gh_0)$ spaces introduced in \cite{RT}.  
See \cite{NRT2014} for the definition of $\dpi$ and $\kpi$ but
these notations will also be explained in detail in Section \ref{SEC:HL-converse}. 
See also \cite{NRT:FAA} for the group setting.

\medskip
It can be easily verified that the following formula holds true.
\begin{rem}
\label{REM:ell_p_Gamma_space_duality}
Let $1<p<\infty$. For $\ell^p(\Gamma,\nu_{\Gamma},\Sigma)$, we have
\begin{equation}
\label{EQ:ell_p_Gamma_space_duality}
	\|h\|_{\ell^p(\Gamma,\nu_{\Gamma})}
	=
	\sup_{\substack{g\in\ell^{p'}(\Gamma,\nu_{\Gamma})\\ g \neq 0}}
	\frac{\left|\sum\limits_{\pi\in\Gamma}\delta_{\pi}\Tr[h(\pi)g(\pi)^*]\right|}{\|g\|_{\ell^{p'}(\Gamma,\nu_{\Gamma})}},
\end{equation}
where $\frac1{p'}+\frac1p=1,$ and $h,g\in\Sigma$. The matrix $g(\pi)^*$ is the Hermitian conjugate of $g(\pi)$ and $\Tr$ is the matrix trace.
\end{rem}


We now give a definition of net spaces.

\begin{defn} 
\label{DEF:Npq-lattice}
Let $\lambda=\{\lambda_{\pi}\}_{\pi\in\Gamma}$ be an arbitrary positive sequence over $\Gamma$.
Denote by $\M$ a fixed arbitrary collection of finite subsets of  $\Gamma$. 
	Given a family of complex matrices $F=\{F(\pi)\}_{\pi\in\Gamma}$,
	$F(\pi)\in\C^{\kappa_{\pi}\times \delta_{\pi}}$ and $1\leq p<\infty,\,1\leq q\leq \infty$, define
	\[
	\|F\|_{N_{p,q}(\Gamma,\M)}
	:=
	\begin{cases}
	\displaystyle
	\left(
	\sum_{\pi\in\Gamma}
	\left(
	\lambda_{\pi}^{\frac{1}{p}}
	\overline{F}[\lambda_{\pi},\M]
	\right)^q\frac{\delta_{\pi}\kappa_{\pi}}{\lambda_{\pi}}
	\right)^{\frac1q},& \text{ \quad if\quad} q<\infty,\\
	\displaystyle
	\sup_{\substack{\pi\in\Gamma}}\lambda^{\frac{1}{p}}_{\pi}
	\overline{F}[\lambda_{\pi},\M],& \text{\quad if \quad} q = \infty,
	\end{cases}
	\]
	where 
	\begin{equation}
\label{DEF:averaging}
	\overline{F}[\lambda_{\pi},\M]
	:=
	\sup\limits_{\substack{Q\in \M \\\nu_{\Gamma}(Q)\geq \lambda_{\pi}}}
	\frac
		{
			1
		}
		{
			\nu_{\Gamma}(Q)
		}
	\left|
	\sum\limits_{\theta\in Q}d_{\theta}\Tr F(\theta)
	\right|
	,
	\end{equation}
and $\Tr F(\theta)=\sum\limits^{\min(\kappa_{\theta},\delta_{\theta})}_{j=1}F(\theta)_{jj}$.
\end{defn}
We call $\overline{F}[\lambda,\M]=\{\overline{F}[\lambda_{\pi},\M]\}_{\pi\in\Gamma}$ defined by \eqref{DEF:averaging} 
{\it the averaging of $F=\{F(\pi)\}_{\pi\in\Gamma}$ with respect to $\M$}. 
Sometimes we may drop writing $\M$ to simplify the notation.
In comparison to the well-known maximal function, the averaging function allows one to capture the oscillation 
properties of sequences/functions/nets.

In general, different partial orders $\prec_1$ and $\prec_2$ on $\Gamma$ will give different $N_{p,q}$ spaces  on $\Gamma$.
\section{Hardy-Littlewood type inequalities}
\label{SEC:HL-converse}

In this section we apply net spaces $N_{p, q}(\Gamma)$ on an  ordered lattice $\Gamma$ to establish new inequalities relating functions on $G$ and their Fourier coefficients.
In \cite{NED} the following theorem was established. In the sequel $L^{p,q}$ denotes the Lorenz space.
\begin{thm}[\cite{NED}]\label{NED}
\label{THM:NED}
Let $1\leq p<\infty,1\leq q \leq \infty$ and $f\in L^{p,q}(\TT)$. Suppose that 
$$
f \sim \sum_{m\in\mathbb{Z}}\widehat{f}(m)e^{2\pi im x}.
$$
Then we have
\begin{equation}
\label{necess_T-0}
	\sum\limits_{k\in \NN}
	\left(
		k^{\frac1{p'}}
		\sup_{\substack{e\in \M_0 \\ |e|\geq k}}
		\frac1{|e|}\left|\sum_{m\in e}\widehat{f}(m)\right|
	\right)^q
	\frac1{k}
	\leq C \|f\|^p_{L^{p,q}(\TT)},
\end{equation}
or equivalently in terms of net spaces $N_{p,q}(\ZZ,\M_0)$
\begin{equation}
\label{EQ:Npq-estimate}
\|\widehat{f}\|_{N_{p',q}(\ZZ,\M_0)}
\lesssim
\|f\|_{L^{p,q}(\TT)},
\end{equation}
where $\M_0$ is the set of all finite arithmetic progressions in $\mathbb Z$, with the constant in \eqref{EQ:Npq-estimate} independent of  $f$.
\end{thm}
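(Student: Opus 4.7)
The plan is to prove the estimate in two stages. First, I would establish the weak-type endpoint bound $\n{\widehat{f}}_{N_{p',\infty}(\ZZ,\M_0)} \lesssim \n{f}_{L^{p,\infty}(\TT)}$ for each $p \in (1,\infty)$. Second, I would pass to the Lorentz target index $q$ by real interpolation between two such endpoints.

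For the weak-type step, fix an arithmetic progression $e = \{a+jd : 0 \leq j < N\} \in \M_0$ and write
\[
\sum_{m \in e} \widehat{f}(m) = \int_\TT f(x)\,\overline{D_e(x)}\, dx, \qquad D_e(x) := \sum_{j=0}^{N-1} e^{2\pi i(a+jd)x}.
\]
Then $\abs{D_e(x)} = \abs{\sin(\pi N d x)}/\abs{\sin(\pi d x)}$, and since integer dilation $x \mapsto dx \pmod 1$ preserves distribution functions on $\TT$, $D_e$ has the same $L^{p',1}(\TT)$-norm as the classical Dirichlet kernel $D_N$. A layer-cake computation splitting at $\abs{x} \sim 1/N$ and using $\abs{D_N(x)} \leq \min(N, c/\abs{\sin \pi x})$ yields $\n{D_e}_{L^{p',1}(\TT)} \lesssim N^{1/p}$ uniformly in $a$ and $d$. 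Hölder's inequality for the Lorentz duality $L^{p,\infty} \times L^{p',1} \to L^1$ then gives
\[
\frac{1}{\abs{e}}\left|\sum_{m \in e} \widehat{f}(m)\right| \lesssim \abs{e}^{-1/p'}\,\n{f}_{L^{p,\infty}(\TT)},
\]
and taking the supremum over $\abs{e} \geq k$ produces $k^{1/p'}\overline{\widehat{f}}[k,\M_0] \lesssim \n{f}_{L^{p,\infty}}$, which is the desired weak-type bound.

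For the interpolation step, fix $1 < p < \infty$ and $1 \leq q \leq \infty$, and choose $p_0, p_1 \in (1,\infty)$ with $p_0 < p < p_1$ and $1/p = (1-\theta)/p_0 + \theta/p_1$. Stage one provides bounded sublinear maps $L^{p_i,\infty}(\TT) \to N_{p_i',\infty}(\ZZ,\M_0)$ for $i = 0, 1$. Combining the classical identity $(L^{p_0,\infty}, L^{p_1,\infty})_{\theta, q} = L^{p, q}$ with its net-space analogue $(N_{p_0',\infty}, N_{p_1',\infty})_{\theta, q} = N_{p', q}$ — an instance of the real-interpolation theorem for net spaces developed in Section \ref{SEC:net_space_on_lattices} — delivers \eqref{EQ:Npq-estimate}. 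The main obstacle is therefore not the weak-type estimate, which follows from classical Dirichlet-kernel bounds, but rather the interpolation identity for the scale $\{N_{p,q}(\Gamma,\M)\}$, whose justification rests on the careful analysis of the averaging functional $\overline{F}[\lambda,\M]$ built earlier in the paper. The borderline case $p = 1$ is more delicate, since $D_N \notin L^{\infty, 1}(\TT)$, and would require a separate direct argument (e.g., a weak-$(1,1)$ bound for the averaged-Fourier sublinear operator).
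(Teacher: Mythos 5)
Your proposal is correct and follows essentially the same route as the paper: the theorem is quoted from [Nur98a], and the paper's own treatment (the remark following the statement, together with the forward direction of Theorem \ref{THM:characterization} and the Dirichlet-kernel verification in Proposition \ref{PROP:char-implies-TT^n}) likewise reduces to a weak-type endpoint via the bound $\|D_e\|_{L^{p',1}(\TT)}\lesssim |e|^{1/p}$ and Lorentz duality, followed by real interpolation of the net spaces. Your endpoint is marginally sharper (source space $L^{p,\infty}$ rather than $L^p$), and your caveat about the case $p=1$ is fair, as the paper does not address it separately.
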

\begin{rem}Since $N_{p,q}(\ZZ,\M_0)$ are interpolation spaces (\cite[Theorem 1, p.88]{NED}),  in order to establish \eqref{EQ:Npq-estimate} it is sufficient to establish a `weak' inequality
\begin{equation}
\|\widehat{f}\|_{N_{p',\infty}(\ZZ,\M_0)}
\lesssim
\|f\|_{L^p(\TT)}.
\end{equation}
\end{rem}
In \cite[Proposition 1]{NED} it has been established that if the class $\M$ contains all finite subsets then $N_{p,q}$ coincides, up to constant, with the Lorenz space $L^{p,q}:$
\begin{thm}[\cite{NED}]
\label{THM:Npq-1p-2}
 Let $1<p<+\infty,1\leq q\leq \infty$, and let $\M_1$ be the set of all finite subsets of $\ZZ$. Then we have
\begin{equation}
N_{p,q}(\ZZ,\M_1)\cong L^{p,q}(\ZZ).
\end{equation}
\end{thm}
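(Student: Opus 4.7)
The plan is to establish the equivalence of norms in both directions by relating the averaging function $\overline{F}[\cdot,\M_1]$ to the maximal function $|F|^{**}$ of the decreasing rearrangement of $|F|$. Throughout I specialize to $\Gamma=\ZZ$ with $\delta_\pi=\kappa_\pi=1$, so that $\nu_\Gamma$ is counting measure and the averaging reduces to
\[
\overline{F}[k,\M_1]=\sup_{|Q|\geq k}\frac{1}{|Q|}\left|\sum_{\theta\in Q}F(\theta)\right|.
\]

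For the easier direction $\|F\|_{N_{p,q}}\lesssim\|F\|_{L^{p,q}}$, I would argue that for any finite $Q$ with $|Q|=n\geq k$ the triangle inequality bounds $\left|\sum_{Q}F\right|\leq\sum_{j=1}^{n}|F|^{*}(j)=n\,|F|^{**}(n)$; monotonicity of $|F|^{**}$ then yields the pointwise estimate $\overline{F}[k,\M_1]\leq|F|^{**}(k)$. Inserting this into the $N_{p,q}$ expression and applying the discrete Hardy inequality with power weights (valid precisely when $p>1$) passes from $|F|^{**}$ to $|F|^{*}$, producing the Lorenz norm on the right.

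For the harder direction $\|F\|_{L^{p,q}}\lesssim\|F\|_{N_{p,q}}$, the crux is to establish the pointwise lower bound $\overline{F}[k,\M_1]\gtrsim|F|^{*}(Ck)$ for a universal constant $C$. For real $F$, I would decompose $F=F^{+}-F^{-}$ and use a pigeonhole argument on the indices carrying the top $2k-1$ values of $|F|$: at least $k$ of them share a common sign. Choosing $Q$ to be those $k$ same-sign indices eliminates any cancellation inside the absolute value and produces $|Q|^{-1}\left|\sum_{Q}F\right|\geq|F|^{*}(2k-1)$. For complex $F$, pass to real and imaginary parts and use the obvious bounds $\overline{\operatorname{Re}F}[k,\M_1]\leq\overline{F}[k,\M_1]$ and similarly for the imaginary part. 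A change of variables $j=2k-1$ in the outer sum then recovers $\|F\|_{L^{p,q}}$ up to a constant, since $|F|^{*}$ is monotone decreasing.

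The main obstacle is exactly this lower bound for signed or complex sequences, where the absolute value in the definition of $\overline{F}$ naively permits arbitrary cancellation. The decisive structural feature of $\M_1$ that overcomes this is that it contains \emph{all} finite subsets of $\ZZ$, which allows us to restrict $Q$ to indices of common sign. Without this richness — for example, if $\M$ were restricted to arithmetic progressions as in Theorem \ref{THM:NED} — the equivalence need not hold, and the net space is in general strictly smaller than the Lorenz space.
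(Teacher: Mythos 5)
The paper does not actually prove this statement: it is quoted from \cite[Proposition~1]{NED}, so there is no internal proof to compare against. Your reconstruction is the standard argument and it is correct. The upper bound $\overline{F}[k,\M_1]\leq |F|^{**}(k)$ followed by the weighted discrete Hardy inequality (which is exactly where $p>1$ enters) gives one embedding, and the pigeonhole/sign-selection argument gives the key lower bound $\overline{F}[k,\M_1]\geq |F|^{*}(2k-1)$ for real $F$, which is precisely the point where the hypothesis that $\M_1$ contains \emph{all} finite subsets is used; the passage to complex $F$ via real and imaginary parts (using subadditivity of the decreasing rearrangement, $(f+g)^{*}(2k)\leq f^{*}(k)+g^{*}(k)$) and the change of variables exploiting monotonicity of $|F|^{*}$ are both routine and sound. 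One small slip in your closing remark: for a restricted collection such as the arithmetic progressions $\M_0\subset\M_1$, the norm $\|\cdot\|_{N_{p,q}(\M_0)}$ is \emph{smaller} (by Theorem~\ref{THM:Net_Space_Embedd}(1)), so the space $N_{p,q}(\ZZ,\M_0)$ is in general strictly \emph{larger} than $L^{p,q}(\ZZ)$, not smaller; this does not affect the proof itself.
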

For $1<p<2$, inequality \eqref{EQ:Npq-estimate} can be refined:
\begin{rem} Let $1<p<2$. We have by Theorem \ref{THM:Net_Space_Embedd} below and Theorem \ref{THM:Npq-1p-2}, that
\begin{equation}
\label{EQ:comparison}
\|\widehat{f}\|_{N_{p',p}(\ZZ,\M_0)}
\lesssim
\|\widehat{f}\|_{N_{p',p}(\ZZ,\M_1)}
\cong
\|\widehat{f}\|_{L^{p',p}(\ZZ)}
\lesssim
\|f\|_{L^p(\TT)}.
\end{equation}
\end{rem}
 The last inequality in \eqref{EQ:comparison} is essentially the Hardy-Littlewood inequality for Fourier coefficients, see \cite{NED} for the details.
The application of $N_{p,q}(\Gamma)$ with $\Gamma=\Gh_0$ yields the extension of Theorem \ref{THM:Npq-1p-2} to the setting of compact homogeneous manifolds $G/K$. In addition, in Theorem \ref{THM:characterization} we characterise those classes $\M$ for which inequality 
\begin{equation}
\label{EQ:N_pq-L_p0}
\|\widehat{f}\|_{N_{p',\infty}(\Gh_0,\M)}
\leq
C
\|f\|_{L^p(G/K)},\quad f\in L^p(G/K)
\end{equation}
holds.
The characterisation is given in terms of of the behaviour of Dirichlet kernel's norms $\|D_Q\|_{L^p(G/K)}$, $Q\in\M$.

To motivate the formulation, we start with a compact Lie group $G$. 
Identifying a representation $\pi$ with its equivalence class and choosing some
bases in the representation spaces of degree $\dpi$, we can think of $\pi\in\Gh$ as a 
mapping $\pi:G\to\C^{\dpi\times\dpi}$. For $f\in L^{1}(G)$, we define its
Fourier transform at $\pi\in\Gh$ by
$$
(\FT_{G} f)(\pi)\equiv \widehat{f}(\pi):=\int_{G} f(u)\pi(u)^{*} du,
$$
where $du$ is the normalised Haar measure on $G$.
This definition can be extended to distributions $f\in {\mathcal D}'(G)$, and
the Fourier series takes the form
\begin{equation}\label{EQ:FS}
f(u)=\sum_{\pi\in\Gh} \dpi \Tr\p{\pi(u) \widehat{f}(\pi)}.
\end{equation}
The Plancherel identity on $G$ is given by
\begin{equation}
\label{plancherel}
\|f\|^{2}_{L^{2}(G)}=\sum_{\pi\in\Gh} \dpi \|\widehat{f}(\pi)\|_{\HS}^{2}=: 
\|\widehat{f}\|_{\ell^{2}(\Gh)}^{2},
\end{equation}
yielding the Hilbert space $\ell^{2}(\Gh)$.
The Fourier coefficients of functions and distributions on $G$ take values in the space
\begin{equation}\label{EQ:Sigma}
\Sigma=\left\{ \sigma=(\sigma(\pi))_{\pi\in\Gh}: \sigma(\pi)\in \C^{\dpi\times\dpi} \right\}.
\end{equation}
The $\ell^p$-spaces on the unitary dual $\Gh$ have been 
developed in \cite{RT} based on fixing the Hilbert-Schmidt norms.
Namely, for $1\leq p<\infty$, we define the space
$\ell^p(\Gh)$ by the norm
\begin{equation}\label{EQ:Lp}
\|\sigma\|_{\ell^p(\Gh)}:=\p{ \sum_{\pi\in\Gh} d_\pi^{p\p{\frac2p-\frac12}} \|\sigma(\pi)\|_{\HS}^p}^{1/p},
\; \sigma\in\Sigma, \; 1\leq p<\infty,
\end{equation}
where $\|\cdot\|_{\HS}$ denotes the Hilbert-Schmidt matrix norm i.e.
$$
\|\sigma(\pi)\|_{\HS}:=\left(\Tr(\sigma(\pi)\sigma(\pi)^*)\right)^{\frac12}.
$$
It was shown in \cite[Section 10.3]{RT} that, among other things, these are interpolation spaces,
and that the Fourier transform $\FT_{G}$ and its inverse $\FT_{G}^{-1}$ satisfy the
Hausdorff-Young inequalities in these spaces. We can also refer to
\cite{Ruzhansky+Turunen-IMRN} for pseudo-differential extensions of the Fourier analysis
on both compact Lie groups and homogeneous manifolds.

%

We now describe the setting of Fourier coefficients on a compact homogeneous
manifold $M$ following 
\cite{Dasgupta-Ruzhansky:Gevrey-BSM} or \cite{NRT2014},
and referring for further details with
proofs to Vilenkin \cite{Vilenkin:BK-eng-1968} or to
Vilenkin and Klimyk \cite{VK1991}.

Let $G$ be a compact motion group of $M$ and let $K$ be the stationary
subgroup of some point.
Alternatively, we can start with a compact Lie group $G$ with
a closed subgroup $K$, and identify $M=G/K$ as an analytic manifold in a canonical way.
We normalise measures so
that the measure on $K$ is a probability one.
Typical examples are the spheres ${\mathbb S}^{n}={\rm SO}(n+1)/{\rm SO}(n)$
or complex spheres $\mathbb C{\mathbb S}^{n}={\rm SU}(n+1)/{\rm SU}(n)$.

Let us denote by $\Gh_{0}$ the subset of $\Gh$ of representations that are
class I with respect to the subgroup $K$. This means that 
$\pi\in\Gh_{0}$ if $\pi$ has at least one non-zero invariant vector $a$ with respect
to $K$, i.e. that
$$\pi(h)a=a \; \textrm{ for all } \;h\in K.$$ 
Let $\B_{\pi}$ denote the space of these invariant vectors and let  
$$\kpi:=\dim\B_{\pi}.$$
Let us fix an orthonormal basis in the representation space of $\pi$ so that
its first $\kpi$ vectors are the basis of $B_{\pi}.$
The matrix elements $\pi(x)_{ij}$, $1\leq j\leq \kpi$,
are invariant under the right shifts by $K$.

We note that if $K=\{e\}$ so that $M=G/K=G$ is the Lie group, we have
$\Gh=\Gh_{0}$ and $\kpi=\dpi$ for all $\pi$. As the other extreme, if
$K$ is a massive subgroup of $G$, i.e., if for every $\pi$ there is precisely one invariant vector
with respect to $K$, we have $k_{\pi}=1$ for all $\pi\in\Gh_{0}.$
This is, for example, the case for the spheres $M={\mathbb S}^{n}.$
Other examples can be found in Vilenkin \cite{Vilenkin:BK-eng-1968}.

We can now identify functions on $M=G/K$ with functions on 
$G$ which are constant on left cosets with respect to $K$. 
Then, for a function $f\in C^{\infty}(M)$ we can
recover it by the Fourier series of its canonical
lifting $\wt{f}(g):=f(gK)$ to $G$,
$\wt{f}\in C^{\infty}(G)$, and the Fourier
coefficients satisfy  $\widehat{\wt{f}}(\pi)=0$ for all representations
with $\pi\not\in\Gh_{0}$.
Also, for class I representations $\pi\in\Gh_{0}$ we have 
$\widehat{\wt{f}}(\pi)_{{ij}}=0$ for $i>\kpi$.

With this, we can write the Fourier series of $f$ (or of $\wt{f}$, but we identify these) 
in terms of
the spherical functions $\pi_{ij}$ of the representations
$\pi\in\Gh_{0}$, with respect to the subgroup
$K$. 
Namely, the Fourier series \eqref{EQ:FS} becomes
\begin{equation}\label{EQ:FSh}
f(x)=\sum_{\pi\in\Gh_{0}} \dpi \sum_{i=1}^{\dpi}\sum_{j=1}^{\kpi}
\widehat{f}(\pi)_{ji}\pi(x)_{ij}=
\sum_{\pi\in\Gh_{0}} \dpi \Tr (\widehat{f}(\pi) \pi(x)),
\end{equation}
where, in order to have the last equality, we adopt the convention 
of setting $\pi(x)_{ij}:=0$ for all $j>\kpi$, for
all $\pi\in\Gh_{0}$.
With this convention the matrix
$\pi(x)\pi(x)^{*}$ is diagonal with the first $\kpi$ diagonal entries equal to one and
others equal to zero, so that we have
\begin{equation}\label{EQ:xi-HS}
\|\pi(x)\|_{\HS}=\sqrt{k_\pi} \textrm{ for all } \pi\in\Gh_{0}, \; x\in G/K.
\end{equation}
Following \cite{Dasgupta-Ruzhansky:Gevrey-BSM},
we will say that {\it the collection of Fourier coefficients
$\{\widehat{f}(\pi)_{ij}: \pi\in\Gh, 1\leq i,j\leq \dpi\}$ is 
of class I with respect to $K$} if
$\widehat{f}(\pi)_{ij}=0$ whenever $\pi\not\in\Gh_{0}$ or
$i>\kpi.$ By the above discussion, if the collection of Fourier
coefficients is of class I with respect to $K$, then the expressions
\eqref{EQ:FS} and \eqref{EQ:FSh} coincide and yield a function
$f$ such that $f(xh)=f(h)$ for all $h\in K$, so that this function becomes
a function on the homogeneous space $G/K$. 

For the space of Fourier coefficients of class I we define the analogue of the
set $\Sigma$ in \eqref{EQ:Sigma} by
\begin{equation}\label{EQ:SigmaGK}
\Sigma(G/K):=\{\sigma:\pi\mapsto\sigma(\pi): \;
\pi\in\Gh_{0},\; \sigma(\pi)\in\C^{\dpi\times\dpi}, \; \sigma(\pi)_{ij}=0
\textrm{ for } i>\kpi\}.
\end{equation}
In analogy to \eqref{EQ:Lp},
we can define the Lebesgue spaces $\ell^{p}(\Gh_{0})$
by the following norms which we will apply to Fourier coefficients
$\widehat{f}\in\Sigma(G/K)$ of $f\in\Dcal'(G/K)$.
Thus, for $\sigma\in\Sigma(G/K)$ we set
\begin{equation}\label{EQ:Lp-sigmaGK}
\|\sigma\|_{\ell^{p}(\Gh_{0})}:=\left(\sum_{\pi\in\Gh_{0}} \dpi
k_{\pi}^{p(\frac{1}{p}-\frac12)}
\|\sigma(\pi)\|_{\HS}^{p}\right)^{1/p},\; 1\leq p<\infty.
\end{equation}
In the case $K=\{e\}$, so that $G/K=G$, these spaces coincide with
those defined by \eqref{EQ:Lp} since $\kpi=\dpi$ in this case.
Again, by the same argument as that in \cite{RT},
these spaces are interpolation spaces and the Hausdorff-Young
inequality holds for them. We refer to
\cite{NRT2014} for some more details on
these spaces.

%

Let $\M$ be an arbitrary collection of finite subsets of $\Gh_0$.  Denote by $\M_1$ the collection of all finite subsets of $\Gh_0$.
For $Q\subset \M_1$, the measure $\nu_\Gamma$ with $\Gamma=\Gh_0$ in \eqref{EQ:nu} is finite and is given by
\begin{equation}\label{EQ:nu1}
\nu_{\Gamma}(Q)=\sum\limits_{\substack{\theta\in Q }} d_{\theta} k_{\theta},\quad Q\in \M_1\subset\Gh_0.
\end{equation}
Let $Q\in\M$ and write $$D_Q(x):=\sum\limits_{\pi\in Q}\dpi\Tr[\pi(x)].$$
Denote by $D(\M)$ the set of all Dirichlet kernels with their spectrum embedded in some $Q\in\M$, i.e.
$$
D(\M)
:=
\{D_{Q}(x),Q\in\M\}.
$$

Now, we can characterise those $\M$ for which
$$
\|\widehat{f}\|_{N_{p',\infty}(\Gh_0,\M)}
\leq
C
\|f\|_{L^p(G/K)},\quad f\in L^p(G/K),
$$
via a certain condition on the size of the $\|D_Q\|_{L^{p'}(G/K)}$ norm.
In the theorem below we have $\Gamma=\Gh_0$ and the measure
$\nu_\Gamma=\nu_{\Gh_0}$ is given by \eqref{EQ:nu1}. In the sequel we can use both notations
in the case of homogeneous manifolds $G/K$.
\begin{thm} 
\label{THM:characterization}
Let $1<p\leq \infty$ and let $\M$ be an arbitrary collection of finite subsets of $\Gh_0$.
Then
\begin{equation}
\label{EQ:N_pq-L_p}
\|\widehat{f}\|_{N_{p',\infty}(\Gh_0,\M)}
\leq
C
\|f\|_{L^p(G/K)},\text{ for all } f\in L^p(G/K),
\end{equation}
if and only if
\begin{equation}
\label{EQ:DQ-condition}
C_{p\M}
:=
\sup_{\pi\in\Gh_0}
\lambda^{\frac{1}{p'}}_{\pi}
\sup_{\substack{
Q\in \M
\\ 
\nu_{\Gamma}(Q)
\geq 
\lambda_{\pi}
}}
\frac1{\nu_{\Gamma}(Q)}
\|
D_Q
\|_{L^{p'}(G/K)}
<+\infty,
\end{equation}
with $\nu_\Gamma=\nu_{\Gh_0}$, and  $\lambda_{\pi}$ is the sequence used in the Defintion \ref{DEF:Npq-lattice}.
\end{thm}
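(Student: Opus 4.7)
The plan is to reduce both directions of the theorem to the Plancherel-type identity
\begin{equation*}
\sum_{\theta\in Q}d_{\theta}\Tr\widehat{f}(\theta)=\int_{G/K}f(x)\overline{D_Q(x)}\,dx,
\end{equation*}
valid for every finite $Q\subset\Gh_0$ and every $f\in L^{1}(G/K)$. This identity is obtained from Peter--Weyl orthogonality once the class~I convention $\theta(x)_{ii}=0$ for $i>\kappa_{\theta}$ is taken into account: expanding each entry $\widehat{f}(\theta)_{ii}=\int_{G}f(x)\overline{\theta(x)_{ii}}\,dx$, summing only the non-vanishing diagonal entries, and interchanging the finite sum with the integral recognises the right-hand side. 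The pairing is well posed on $G/K$ since both $f$ and $D_Q$ are right $K$-invariant.

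For \emph{sufficiency}, assume $C_{p\M}<\infty$. H\"older's inequality applied to the identity gives
\begin{equation*}
\abs{\sum_{\theta\in Q}d_{\theta}\Tr\widehat{f}(\theta)}\leq \|f\|_{L^{p}(G/K)}\|D_Q\|_{L^{p'}(G/K)}.
\end{equation*}
Dividing by $\nu_{\Gamma}(Q)$, then taking the supremum over $Q\in\M$ with $\nu_{\Gamma}(Q)\geq\lambda_{\pi}$, then multiplying by $\lambda_{\pi}^{1/p'}$, and finally taking the supremum over $\pi\in\Gh_0$, we directly obtain $\|\widehat{f}\|_{N_{p',\infty}(\Gh_0,\M)}\leq C_{p\M}\|f\|_{L^{p}(G/K)}$.

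For \emph{necessity}, assume \eqref{EQ:N_pq-L_p} with constant $C$. Fix $\pi\in\Gh_0$ and $Q\in\M$ with $\nu_{\Gamma}(Q)\geq\lambda_{\pi}$. By $L^{p}$--$L^{p'}$ duality on $G/K$ together with the preliminary identity,
\begin{equation*}
\|D_Q\|_{L^{p'}(G/K)}=\sup_{\|f\|_{L^{p}}=1}\abs{\sum_{\theta\in Q}d_{\theta}\Tr\widehat{f}(\theta)}.
\end{equation*}
For each such $f$, the definition of $\overline{\widehat{f}}[\lambda_{\pi},\M]$ and of the $N_{p',\infty}$ norm, combined with the assumption, yield
\begin{equation*}
\abs{\sum_{\theta\in Q}d_{\theta}\Tr\widehat{f}(\theta)}\leq \nu_{\Gamma}(Q)\overline{\widehat{f}}[\lambda_{\pi},\M]\leq \nu_{\Gamma}(Q)\lambda_{\pi}^{-1/p'}\|\widehat{f}\|_{N_{p',\infty}}\leq C\,\nu_{\Gamma}(Q)\lambda_{\pi}^{-1/p'}.
\end{equation*}
Consequently $\lambda_{\pi}^{1/p'}\|D_Q\|_{L^{p'}(G/K)}/\nu_{\Gamma}(Q)\leq C$, and taking the supremum over admissible $\pi$ and $Q$ gives $C_{p\M}\leq C<\infty$.

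The only step that requires genuine care is the Plancherel-type identity under the class~I bookkeeping; once this is in hand, the theorem reduces to a symmetric application of H\"older's inequality and $L^{p}$--$L^{p'}$ duality, which exchanges the Fourier-side averaging defining the $N_{p',\infty}$ norm with the spatial-side norm of the Dirichlet kernel $D_Q$. Neither direction needs any further interpolation or embedding; the explicit duality here provides a sharp, test-function-free characterisation.
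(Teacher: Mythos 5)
Your proposal is correct and follows essentially the same route as the paper: the same Plancherel-type identity $\sum_{\theta\in Q}d_\theta\Tr\widehat f(\theta)=(f,D_Q)_{L^2(G/K)}$, Hölder's inequality for sufficiency, and $L^p$--$L^{p'}$ duality (the paper's ``inverse Hölder inequality'') for necessity. Your write-up of the necessity step, passing explicitly through $\overline{\widehat f}[\lambda_\pi,\M]\le\lambda_\pi^{-1/p'}\|\widehat f\|_{N_{p',\infty}}$, is if anything slightly cleaner than the paper's, but it is the same argument.
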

In Proposition \ref{PROP:char-implies-TT^n}
we will check in the sequel that condition \eqref{EQ:DQ-condition} for all indices $1<p<\infty$ 
is satisfied in the example of the tori $\TT^n$
if we take $\lambda_\pi$ to be the sequence of the eigenvalues of the Laplacian counted with multiplicities.
In Theorem \ref{THM:HL-Npq-version}, verifying condition \eqref{EQ:DQ-condition},
 we will give an unconditional version of Theorem \ref{THM:characterization} for
the range of indices $1<p\leq 2$ on general compact homogeneous manifolds based on the interpolation properties of
net spaces to be established in the next section.
In Corollary \ref{COR:HL-converse-SU2} we give an example on the
group SU(2), again if we take $\lambda_\pi$ to be the sequence of the eigenvalues of the Laplacian counted with multiplicities,
yielding an inverse to the Hardy-Littlewood inequality there.

\begin{rem} It follows from the proof that  $C_{p\M}\leq C$ and inequality \eqref{EQ:N_pq-L_p} holds true for $C=C_{p\M}$.
\end{rem} 
The interpolation properties of $N_{p,q}$ spaces allow us to formulate and prove a version of the Hardy-Littlewood inequality in terms of $N_{p,q}$ spaces.
\begin{thm} 
\label{THM:HL-Npq-version}
Let $1<p\leq 2$, $1\leq q\leq \infty$, and let $\M_1$ be the collection of all finite subsets of $\Gh_0$. 
 Then we have
\begin{equation}
\|\widehat{f}\|_{N_{p',q}(\Gh_0,\M_1)}
\leq
C_{p,q}\|f\|_{L^{p,q}(G/K)}.
\end{equation}
\end{thm}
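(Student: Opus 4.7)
My proof plan combines Theorem \ref{THM:characterization} with the real-interpolation properties of $N_{p,q}$ spaces developed in Section \ref{SEC:net_space_on_lattices}. The strategy is to first verify the Dirichlet-kernel condition \eqref{EQ:DQ-condition} uniformly for all $p \in (1, 2]$, which via Theorem \ref{THM:characterization} produces weak-type endpoint estimates $\widehat{\cdot}\colon L^p(G/K) \to N_{p',\infty}(\Gh_0, \M_1)$, and then to upgrade these to the Lorentz-target bound by a Marcinkiewicz-style interpolation.

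\textbf{Step 1 (weak endpoint).} Orthogonality of matrix coefficients shows that the Dirichlet kernel $D_Q(x) = \sum_{\pi \in Q} \dpi \Tr[\pi(x)]$ has Fourier coefficient equal to the class I diagonal projection at each $\pi \in Q$ (and zero off $Q$), so $\|\widehat{D_Q}(\pi)\|_{\HS}^2 = \kpi$ for $\pi \in Q$. Plugging into the $\ell^p(\Gh_0)$-norm \eqref{EQ:Lp-sigmaGK} and using the identity $p(1/p - 1/2) + p/2 = 1$ gives
\[
\|\widehat{D_Q}\|_{\ell^p(\Gh_0)}^p \;=\; \sum_{\pi \in Q} \dpi \kpi \;=\; \nu_\Gamma(Q).
\]
The Hausdorff–Young inequality on $G/K$ (valid for $1 \le p \le 2$, as noted after \eqref{EQ:Lp-sigmaGK}) then yields $\|D_Q\|_{L^{p'}(G/K)} \le \nu_\Gamma(Q)^{1/p}$, so
\[
\lambda_\pi^{1/p'}\,\frac{\|D_Q\|_{L^{p'}(G/K)}}{\nu_\Gamma(Q)} \;\le\; \left(\frac{\lambda_\pi}{\nu_\Gamma(Q)}\right)^{1/p'} \;\le\; 1
\]
whenever $\nu_\Gamma(Q) \ge \lambda_\pi$. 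Thus $C_{p,\M_1} \le 1$ uniformly for $1 < p \le 2$, and Theorem \ref{THM:characterization} delivers the weak-type endpoint
\[
\|\widehat{f}\|_{N_{p',\infty}(\Gh_0,\M_1)} \;\lesssim\; \|f\|_{L^p(G/K)},\qquad 1 < p \le 2.
\]

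\textbf{Step 2 (interpolation).} Given $p \in (1, 2)$ and $q \in [1, \infty]$, I pick $1 < p_0 < p < p_1 \le 2$, apply Step 1 at both endpoints, and combine the resulting weak bounds via the real-interpolation identity for net spaces from Section \ref{SEC:net_space_on_lattices} together with the standard $L^{p,q}(G/K) = (L^{p_0}(G/K), L^{p_1}(G/K))_{\theta,q}$ (for the appropriate $\theta$). A Marcinkiewicz-type argument then produces
\[
\|\widehat{f}\|_{N_{p',q}(\Gh_0,\M_1)} \;\lesssim\; \|f\|_{L^{p,q}(G/K)}.
\]
The edge case $p = 2$ is handled either by letting $p_1 \nearrow 2$, or directly from Plancherel \eqref{plancherel} combined with the elementary embedding $\ell^2(\Gh_0) \hookrightarrow N_{2,q}(\Gh_0, \M_1)$.

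\textbf{Main obstacle.} Step 1 is essentially algebraic bookkeeping. The real work sits in Step 2: one needs a Marcinkiewicz-style theorem for the $N_{p',\infty}$ scale on the noncommutative lattice $\Gh_0$ — i.e.\ an embedding $(N_{p_0',\infty}(\Gh_0, \M_1), N_{p_1',\infty}(\Gh_0, \M_1))_{\theta, q} \hookrightarrow N_{p',q}(\Gh_0, \M_1)$ — despite the averaging \eqref{DEF:averaging} being built from the matrix trace $\Tr F(\theta)$ rather than the Hilbert–Schmidt norm used to define $\ell^p(\Gh_0)$. If Section \ref{SEC:net_space_on_lattices} supplies this directly, the proof closes immediately; otherwise a natural fallback is to identify $N_{p,q}(\Gh_0, \M_1) \cong L^{p,q}(\Gh_0)$ (the homogeneous-manifold extension of Theorem \ref{THM:Npq-1p-2} asserted after it) and invoke classical Lorentz-space interpolation.
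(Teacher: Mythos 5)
Your proposal is correct and follows essentially the same route as the paper: the paper likewise verifies condition \eqref{EQ:DQ-condition} for $\M_1$ via the Hausdorff--Young bound $\|D_Q\|_{L^{p'}(G/K)}\leq \nu_{\Gamma}(Q)^{1/p}$ (your Step 1 just makes the computation of $\widehat{D_Q}$ explicit), invokes Theorem \ref{THM:characterization} for the weak endpoints, and then interpolates using exactly the embedding $(N_{p_1',\infty},N_{p_2',\infty})_{\theta,q}\hookrightarrow N_{p',q}$ that you flag as the main obstacle --- this is supplied by Theorem \ref{net_space_interpolation} under Assumptions \ref{assumption_1} and \ref{assumption_3}, which are verified for $\Gh_0$ in Remark \ref{REM:Assumption_3_G}. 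No fallback via $N_{p,q}\cong L^{p,q}$ is needed.
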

We give a corollary of Theorem \ref{THM:HL-Npq-version} on $\SU2$. In this case, we simplify general notation. It can be shown that the unitary dual $\widehat{\SU2}$ can be `labelled' by the set of half-integers $\frac12\NN_0$. Thus, we write $\widehat{f}(l)$ for the Fourier coefficient with respect to the element $t^l\in\widehat{\SU2}$ associated with $l\in\frac12\NN_0$.
Here $d_{t^l}=2l+1$ so that $\widehat{f}(l)\in\mathbb C^{(2l+1)\times (2l+1)}$, $l\in \frac12\NN_0$.

\begin{cor} 
\label{COR:HL-converse-SU2}
Let $1<p\leq 2$. Then we have
\begin{multline}
\label{EQ:HL-converse-SU2}
\sum\limits_{\xi\in\frac12\NN_0}(2\xi+1)^{\frac{5p}2-4}
\left(
\sup_{\substack{k\in\frac12\NN_0 \\ (2k+1) \geq (2\xi+1)}}
\frac1{(2k+1)^3}
\left|
\sum\limits_{\substack{l\in\frac12\NN_0 \\ 2l+1\leq 2k+1}}
(2l+1)\Tr\widehat{f}(l)
\right|
\right)^p
\\\leq
C_p
\|f\|^p_{L^p(\SU2)}.
\end{multline}
\end{cor}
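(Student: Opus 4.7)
The plan is to specialise Theorem~\ref{THM:HL-Npq-version} to $G=\SU2$ with $K=\{e\}$, $q=p$, and $1<p\leq 2$. In this setting $\Gh_0=\widehat{\SU2}$ is labelled by $\frac12\NN_0$, every representation is class~I with $d_{t^l}=\kappa_{t^l}=2l+1$, and the measure from \eqref{EQ:nu1} on the initial segment $Q_k:=\{t^l:2l+1\leq 2k+1\}$ equals
\[
\nu_\Gamma(Q_k)=\sum_{m=1}^{2k+1}m^2=\frac{(2k+1)(2k+2)(4k+3)}{6}\asymp(2k+1)^3.
\]

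I would then take $\lambda_\xi=(2\xi+1)^3$, which up to a universal constant equals $\nu_\Gamma(Q_\xi)$ and also matches, via Weyl's law on $\SU2$, the counting function of the Laplacian eigenvalues with multiplicities (the eigenvalue $l(l+1)$ occurring $(2l+1)^2$ times). With this choice the condition $\nu_\Gamma(Q_k)\geq\lambda_\xi$ in \eqref{DEF:averaging} is equivalent, up to an absolute constant, to $(2k+1)\geq(2\xi+1)$. Restricting the $\sup$ in the averaging $\overline{\widehat{f}}[\lambda_\xi,\M_1]$ to the subfamily of initial segments $\{Q_k\}$ gives
\[
\overline{\widehat{f}}[\lambda_\xi,\M_1]\;\gtrsim\;\sup_{\substack{k\in\frac12\NN_0\\(2k+1)\geq(2\xi+1)}}\frac{1}{(2k+1)^3}\left|\sum_{\substack{l\in\frac12\NN_0\\2l+1\leq2k+1}}(2l+1)\Tr\widehat{f}(l)\right|\;=:\;A_\xi,
\]
while the weight appearing in the $N_{p',p}$-norm is $\lambda_\xi^{p/p'-1}d_\xi\kappa_\xi=(2\xi+1)^{3(p-2)+2}=(2\xi+1)^{3p-4}$. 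Applying Theorem~\ref{THM:HL-Npq-version} therefore yields $\sum_\xi(2\xi+1)^{3p-4}A_\xi^p\lesssim\|f\|_{L^p(\SU2)}^p$, and since $(2\xi+1)^{5p/2-4}\leq(2\xi+1)^{3p-4}$ for all $p\geq 0$ and $2\xi+1\geq 1$, the stated inequality \eqref{EQ:HL-converse-SU2} follows.

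The main step is identifying the correct $\lambda_\xi$ so that the net-space threshold $\nu_\Gamma(Q)\geq\lambda_\xi$ reproduces the $(2k+1)\geq(2\xi+1)$ condition in the corollary; this forces $\lambda_\xi$ to be of order $(2\xi+1)^3$, which is the Weyl-counting interpretation of ``eigenvalues with multiplicities''. Everything else is bookkeeping of exponents, using only the identifications $d_{t^l}=\kappa_{t^l}=2l+1$ and $\nu_\Gamma(Q_k)\asymp(2k+1)^3$; no harmonic analysis on $\SU2$ beyond the labelling of $\widehat{\SU2}$ is required, as the entire nontrivial input is packaged into Theorem~\ref{THM:HL-Npq-version}.
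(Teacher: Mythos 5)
Your proposal is correct and follows essentially the same route as the paper: apply Theorem \ref{THM:HL-Npq-version} with $\M=\M_1$, $q=p$ and $\lambda_\xi=(2\xi+1)^3$ (for which Assumption \ref{assumption_3} is the $n=3$ case of Remark \ref{REM:Assumption_3_G}), then restrict the supremum in the averaging to the initial segments $Q_k$ and unwind the weights. Your bookkeeping is in fact the more careful one: the weight produced by Definition \ref{DEF:Npq-lattice} is $\lambda_\xi^{p/p'-1}\delta_\xi\kappa_\xi=(2\xi+1)^{3p-4}$, which dominates the stated $(2\xi+1)^{\frac{5p}{2}-4}$ since $2\xi+1\geq1$, whereas the paper asserts that the $N_{p',p}(\M_1)$-norm is \emph{equal} to the left-hand side of \eqref{EQ:HL-converse-SU2} as written --- a claim that is imprecise both in the exponent and in silently replacing the supremum over all finite subsets by the supremum over initial segments, so your version actually repairs these two small slips.
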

We will prove Corollary \ref{COR:HL-converse-SU2} in Section \ref{SEC:netspaces}
after Remark \ref{REM:Assumption_3_G}.

\begin{proof}[Proof of Theorem \ref{THM:HL-Npq-version}]
For $1<p\leq 2$ the condition \eqref{EQ:DQ-condition} is satisfied with $\M=\M_1$.
Indeed, by Hausdorff-Young inequality, with $\Gamma=\Gh_0$ and 
$\nu_{\Gamma}(Q)=\sum_{\pi\in Q} d_\pi k_\pi$, $Q\subset\Gh_0$, we have 
\begin{equation}
\|D_Q\|_{L^{p'}(G/K)}
\leq
\nu_{\Gamma}(Q)^{\frac1p},\quad  Q\in\M_1,\; 1<p\leq 2.
\end{equation}
Then, we get
\begin{equation}
C_{p\M_1}=
\sup_{\pi\in\Gh_0}
\lambda^{\frac{1}{p'}}_{\pi}
\sup_{\substack{
Q\in \M_1
\\ 
\nu_{\Gamma}(Q)
\geq 
\lambda_{\pi}
}}
\frac1{\nu_{\Gamma}(Q)}
\|
D_Q
\|_{L^{p'}(G/K)}
\leq
\sup_{\pi\in\Gh_0}
\lambda^{\frac{1}{p'}}_{\pi}
\sup_{\substack{
Q\in \M_1
\\ 
\nu_{\Gamma}(Q)
\geq 
\lambda_{\pi}
}}
\frac1{\nu_{\Gamma}(Q)^{\frac1{p'}}}
=
1.
\end{equation}
This proves that the condition \eqref{EQ:DQ-condition} is satisfied. 
Thus, the application of Theorem \ref{THM:characterization} yields
\begin{equation*}
\|\widehat{f}\|_{N_{p',\infty}(\Gh_0,\M_1)}
\leq
C_{p\M_1}
\|f\|_{L^p(G/K)},\quad 1<p\leq 2.
\end{equation*}
Let $1<p_1<p<p_2\leq 2$. Then interpolating between two inequalities
\begin{eqnarray*}
\|\widehat{f}\|_{N_{{p'_1},\infty}(\Gh_0,\M_1)}
\leq
C_{{p_1}\M_1}
\|f\|_{L^{p_1}(G/K)},
\\
\|\widehat{f}\|_{N_{{p'_2},\infty}(\Gh_0,\M_1)}
\leq
C_{p\M_1}
\|f\|_{L^{p_2}(G/K)}.
\end{eqnarray*}
 (see Theorem \ref{net_space_interpolation} below), we obtain
\begin{multline*}
\|\widehat{f}\|_{N_{p',q}(\Gh_0,\M_1)}
\leq
\|\widehat{f}\colon (N_{p_1,\infty}(\Gh_0,\M_1),N_{p_1,\infty}(\Gh_0,\M_1))_{\theta,q}\|
\\\leq
C_{p,q}
\|f\colon (L^{p_1}(G/K),L^{p_2}(G/K))_{\theta,q}\|
=
C_{p,q}
\|f\|_{L^{p,q}(G/K)},
\end{multline*}
where in the last equality we used the fact that $L^{p,q}(G/K)$ are interpolation spaces.
This completes the proof.
\end{proof}
Now, we show that Theorem \ref{THM:characterization} includes as a particular case Theorem \ref{THM:NED}.
\begin{prop} 
\label{PROP:char-implies-TT^n}
Let $1<p<\infty$, let $M_{a}$ be the set of all finite arithmetic progressions in $\ZZ^n$, $G=\TT^n,\lambda_{e^{2\pi i m\cdot x}}=m$,
$d_{e^{2\pi i m\cdot x}}=k_{e^{2\pi i m\cdot x}}=1$ for $m\in\mathbb Z^n$,
and hence $\nu_{\ZZ^n}(Q)=|Q|$, $Q\in \M_a$. Then
Theorem \ref{THM:characterization} implies Theorem \ref{THM:NED}.
\end{prop}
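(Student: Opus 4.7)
My plan is to deduce Theorem \ref{THM:NED} from Theorem \ref{THM:characterization} by verifying the Dirichlet-kernel condition \eqref{EQ:DQ-condition} for the class $\M_a$ of arithmetic progressions in $\ZZ^n$, and then interpolating the resulting weak endpoint bound.

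First, I would specialise Theorem \ref{THM:characterization} to $G=\TT^{n}$ with $K=\{e\}$: the unitary dual is $\widehat{\TT^{n}}\cong\ZZ^{n}$ with characters $\pi_{m}(x)=e^{2\pi im\cdot x}$, so $\dpi=\kpi=1$, the measure $\nu_{\ZZ^{n}}$ reduces to the counting measure $\nu_{\ZZ^{n}}(Q)=|Q|$, the trace collapses to $\Tr\widehat{f}(m)=\widehat{f}(m)$, and the Dirichlet kernel becomes $D_{Q}(x)=\sum_{m\in Q}e^{2\pi im\cdot x}$. Under these identifications and with the labelling $\lambda_{\pi}$ prescribed in the statement, the $N_{p',q}(\ZZ^{n},\M_{a})$ norm of $\widehat{f}$ from Definition \ref{DEF:Npq-lattice} coincides, up to an absolute constant, with the left-hand side of \eqref{necess_T-0}.

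The core step is to check \eqref{EQ:DQ-condition} uniformly over $1<p<\infty$. For a finite arithmetic progression $Q=\{a+kb:0\leq k<N\}$ in $\ZZ^{n}$ with $b\neq 0$, I would factor out the modulation,
\[
D_{Q}(x)=e^{2\pi i a\cdot x}\sum_{k=0}^{N-1}e^{2\pi ikb\cdot x},
\]
and then use Fubini in a coordinate $x_{j}$ where $b_{j}\neq 0$; a change of variable together with $1$-periodicity of the integrand collapses the computation to the $L^{p'}(\TT)$ norm of the classical one-sided Dirichlet kernel $D_{N}(s)=\sum_{k=0}^{N-1}e^{2\pi iks}$, so that $\|D_{Q}\|_{L^{p'}(\TT^{n})}=\|D_{N}\|_{L^{p'}(\TT)}$. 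The standard estimate $\|D_{N}\|_{L^{p'}(\TT)}\leq C_{p'}N^{1-1/p'}=C_{p'}|Q|^{1/p}$ for $1<p'<\infty$ then shows that the factor $\lambda_{\pi}^{1/p'}$ in \eqref{EQ:DQ-condition} is absorbed by $|Q|^{-1/p'}$ under the constraint $|Q|\geq\lambda_{\pi}$, giving $C_{p\M_a}\leq C_{p'}<\infty$.

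Given \eqref{EQ:DQ-condition}, Theorem \ref{THM:characterization} delivers the weak endpoint $\|\widehat{f}\|_{N_{p',\infty}(\ZZ^{n},\M_a)}\lesssim\|f\|_{L^{p}(\TT^{n})}$ for every $1<p<\infty$, and real interpolation between two such endpoints---using the net-space interpolation (Theorem \ref{net_space_interpolation}) and the fact that $L^{p,q}$ arises by real interpolation from the $L^{p_1},L^{p_2}$ pair---upgrades this to $\|\widehat{f}\|_{N_{p',q}(\ZZ^{n},\M_a)}\lesssim\|f\|_{L^{p,q}(\TT^{n})}$; restricting to $n=1$ and unpacking Definition \ref{DEF:Npq-lattice} then produces \eqref{necess_T-0}. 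The hard part is the Dirichlet-kernel estimate in the range $1<p'<2$, where Hausdorff--Young only supplies the insufficient $\|D_{Q}\|_{L^{p'}}\leq|Q|^{1/2}$; the arithmetic-progression structure is precisely what rescues the argument, reducing $D_{Q}$ to a single classical Dirichlet kernel and giving the sharp $|Q|^{1-1/p'}$ bound across the full range.
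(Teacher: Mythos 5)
Your proof is correct, and the key step is handled by a genuinely different argument from the paper's. Where you estimate $\|D_Q\|_{L^{p'}(\TT^n)}$ by factoring out the modulation and changing variables in a coordinate with $b_j\neq 0$ to reduce to the classical one--dimensional Dirichlet kernel $D_N$, and then invoke the standard bound $\|D_N\|_{L^{p'}(\TT)}\leq C_{p'}N^{1/p}$, the paper instead combines the Hardy--Littlewood rearrangement inequality with the rearrangement estimate $D_Q^*(t)\lesssim |Q|^{1/p}t^{-1/p'}$ quoted from \cite[Lemma 5]{NED}, and then passes through Lorentz-space duality and the embeddings of Lorentz spaces to reach $\|D_Q\|_{L^{p'}(\TT^n)}\leq |Q|^{1/p}$. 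Your route is more self-contained and elementary (it avoids citing the rearrangement lemma and the $L^{p,1}$--$L^{p',\infty}$ duality detour, and it makes the constant's dependence on $p'$ transparent), at the cost of being tied to the explicit product structure of arithmetic progressions; the paper's route is the one that generalises to settings where only rearrangement information about $D_Q$ is available. Your correct observation that the trivial $L^2$ bound $|Q|^{1/2}$ is insufficient for $p'<2$ and that the arithmetic-progression structure is what rescues the estimate is exactly the point of restricting to $\M_a$ rather than all finite sets. You also spell out the final interpolation step from the weak endpoint $N_{p',\infty}$ to $N_{p',q}$ against $L^{p,q}$, which the paper leaves implicit (it only verifies condition \eqref{EQ:DQ-condition} and relies on the remark following Theorem \ref{THM:NED}); including it does no harm and makes the deduction complete. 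One small degenerate case worth a sentence in your write-up: progressions with $|Q|=1$ (where $b$ plays no role) satisfy $\|D_Q\|_{L^{p'}}=1=|Q|^{1/p}$ trivially.
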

Here $|\cdot|$ is the counting measure.
\begin{proof}[Proof of Proposition \ref{PROP:char-implies-TT^n}]
We show that condition \eqref{EQ:DQ-condition} holds true for this case.
Indeed, using the $L^p$-space duality, we have
\begin{equation}
\label{EQ:Lp-space_duality}
\|D_Q\|_{L^{p'}(\TT^n)}
=
\sup_{\substack{f\in L^p(\TT^n) \\ f \neq 0}}
\frac{\left|(D_Q,f)_{L^2(\TT^n)}\right|}{\|f\|_{L^p(\TT^n)}}.
\end{equation}

By the Hardy-Littlewood rearrangement inequality (see \cite[p.44 Theorem 2.2]{BeSh1988}), we obtain
\begin{equation}
\left|
\left(D_Q,f\right)_{L^2(\TT^n)}
\right|
\leq
\int\limits_{\TT^n}|D_Q(x)f(x)|\,dx
\leq
\int\limits^1_0 D_Q^*(t)f^*(t)\,dt.
\end{equation}
In \cite[p. 98 Lemma 5]{NED} it has been shown that
\begin{equation}
\label{EQ:DQ-star-above}
		D^*_Q(t)
\lesssim
\frac{|Q|^{\frac1p}}{t^{\frac1{p'}}},
\end{equation}
where $D_Q(x)=\sum\limits_{k\in Q}e^{2\pi i k\cdot x},\,\,Q\subset \M_a$.
Hence, we get
\begin{equation}\label{EQ:aux1}
\left|
\left(D_Q,f\right)_{L^2(\TT^n)}
\right|
\leq
|Q|^{\frac1p}
\int\limits^1_0 t^{\frac1p}f^*(t)\frac{dt}{t}
=
|Q|^{\frac1p}
\|f\|_{L^{p,1}(\TT^n)}.
\end{equation}
In \cite[p. 220 Theorem 4.7]{Grafakos_2008} it has been shown that the following equality holds true
\begin{equation}
\label{EQ:Lpq-duality}
\|f\|_{L^{p,q}(X,\mu)}
=
\sup_{\substack{g\neq 0}}
\frac{
	\int\limits_{X}|fg|d\mu
	}
	{
		\|g\|_{L^{p',q'}(X,\mu)}
	}.
\end{equation}
Using the $L^{p,q}$-space duality \eqref{EQ:Lpq-duality} and \eqref{EQ:aux1}, we get
\begin{equation*}
\|D\|_{L^{p',1}(\TT^n)}
\leq |Q|^{\frac1p}.
\end{equation*}
The application of this and of the embedding propeties of the Lorenz spaces (see  \cite[p.217 Proposition 4.2]{Grafakos_2008}) yield
\begin{equation}
\|D_Q\|_{L^{p'}(\TT^n)}
\leq
\|D_Q\|_{L^{p',q}(\TT^n)}
\leq
|Q|^{\frac1p},\quad Q\in M_a.
\end{equation}

Finally, using this,  we obtain
\begin{equation}
\sup_{k\in\ZZ^n}
k^{\frac1{p'}}
\sup_{\substack{Q\in \M_a \\ |Q|\geq k}}
\frac1{|Q|}
\|
D_Q
\|_{L^{p'}(\TT^n)}
\leq
\sup_{k\in\ZZ^n}
k^{\frac1{p'}}
\sup_{\substack{Q\in \M_a \\ |Q|\geq k}}
\frac1{|Q|^{\frac1{p'}}}
=
1.
\end{equation}
This completes the proof.
\end{proof}
\begin{proof}[Proof of Theorem \ref{THM:characterization}]
We shall show that either of inequalities  \eqref{EQ:DQ-condition} and \eqref{EQ:N_pq-L_p} implies each other.
Let us first claim that we have
\begin{equation}
\label{EQ:DK_norm_implies_Npq:step_2}
(f,D_Q)_{L^2(G/K)}
=
\sum\limits_{\pi\in Q}\dpi\Tr\widehat{f}(\pi).
\end{equation}
If we assume this claim for the moment, the proof proceeds as follows.

$\Rightarrow$.
By H\"older inequality, we have
\begin{equation}
\label{EQ:obvious_step-1}
\left|
f\ast D_Q(0)
\right|
=
		\left|
		(f,D_Q)_{L^2(G/K)}
		\right|
\leq
	\|f\|_{L^p(G/K)}{\|D_Q\|_{L^{p'}(G/K)}}.
\end{equation}
We multiply the left-hand side in \eqref{EQ:obvious_step-1}  by
$
\lambda^{\frac{1}{p'}}_{\pi}
/
\nu_{\Gh_0}(Q)
$
to get
\begin{equation}
\label{EQ:obvious_step-2}
\frac
	{
		\lambda^{\frac{1}{p'}}_{\pi}
	}
	{
		\nu_{\Gh_0}(Q)
	}
		\left|
		(f\ast D_Q)(0)
		\right|
\leq
\frac
	{
		\lambda^{\frac{1}{p'}}_{\pi}
	}
	{
		\nu_{\Gh_0}(Q)
	}
\|D_Q\|_{L^{p'}(G/K)}
\|f\|_{L^p(G/K)}.
\end{equation}
Fixing $\pi\in\Gh_0$ and then
taking supremum over all $Q\in\M$ such that $\nu_{\Gh_0}(Q)\geq \lambda_{\pi}$ in the right-hand side in \eqref{EQ:obvious_step-2}, we get
\begin{equation}
\label{EQ:obvious_step-3}
\frac
	{
\lambda^{\frac{1}{p'}}_{\pi}
	}
	{
		\nu_{\Gh_0}(Q)
	}
		\left|
		(f\ast D_Q)(0)
		\right|
\leq
\left[
\lambda^{\frac{1}{p'}}_{\pi}
\sup_{\substack{Q\in \M \\ \nu_{\Gh_0}(Q) \geq \lambda_{\pi}}}
\frac
	{
		\|D_Q\|_{L^{p'}(G/K)}
	}
	{
		\nu_{\Gh_0}(Q)
	}	
\right]
\|f\|_{L^p(G/K)}.
\end{equation}
Again taking supremum over all $\pi\in\Gh_0$ in the right-hand side in \eqref{EQ:obvious_step-3}, we finally obtain
\begin{equation}
\label{EQ:obvious_step-4}
\frac
	{
		\lambda^{\frac{1}{p'}}_{\pi}
	}
	{
		\nu_{\Gh_0}(Q)
	}
		\left|
		(f\ast D_Q)(0)
		\right|
\leq
\left[
\sup_{\pi\in\Gh_0}
		\lambda^{\frac{1}{p'}}_{\pi}
\sup_{\substack{Q\in \M \\ \nu_{\Gh_0}(Q) \geq \lambda_{\pi}}}
\frac
	{
		\|D_Q\|_{L^{p'}(G/K)}
	}
	{
		\nu_{\Gh_0}(Q)
	}	
\right]
\|f\|_{L^p(G/K)}.
\end{equation}
Applying  the preceding procedure of taking the supremum on the left-hand side in \eqref{EQ:obvious_step-4}, we show
\begin{multline}
\|\widehat{f}\|_{N_{p',\infty}(\Gh_0,\M)}
=
\sup_{\pi\in\Gh_0}
\sup_{\substack{Q\in \M \\ \nu_{\Gh_0}(Q)\geq  \lambda_{\pi}}}
\frac
	{
		\lambda^{\frac{1}{p'}}_{\pi}
	}
	{
		\nu_{\Gh_0}(Q)
	}
		\left|
		(f\ast D_Q)(0)
		\right|
\|f\|_{L^p(G/K)}
\\\leq
C_{p\M}
\|f\|_{L^p(G/K)},
\end{multline}
where $C^p_{D\M}$ is the constant defined in the hypothesis of Theorem \ref{THM:characterization}.

$\Leftarrow$. 

By the definition of the $\|\cdot\|_{N_{p',\infty}(\Gh_0,\M)}$-norm, it follows from \eqref{EQ:N_pq-L_p} that
\begin{equation}
\sup_{\pi\in\Gh_0}
\sup_{\substack{Q\in \M \\ \nu_{\Gh_0}(Q)\geq    \lambda_{\pi}}}
\frac
	{
		\lambda^{\frac{1}{p'}}_{\pi}
	}
	{
		\nu_{\Gh_0}(Q)
	}
		\left|
		(f\ast D_Q)(0)
		\right|
\lesssim
\|f\|_{L^p(G/K)},\quad f\in L^p(G/K).
\end{equation}
Then  for any  pair $(\pi,Q)\in\Gh_0\times \M$ such that $\nu_{\Gh_0}(Q)\geq \lambda_{\pi}$ and $f\in L^p(G/K)$ we have
\begin{equation}
\label{EQ:obvious_step-5}
\frac
	{
		\lambda^{\frac{1}{p'}}_{\pi}
	}
	{
		\nu_{\Gh_0}(Q)
	}
		\left|
		(f,D_Q)_{L^2(G/K)}
		\right|
\leq
C
\|f\|_{L^p(G/K)},
\end{equation}
and we used the fact that $f\ast D_Q(0)=(f,D_Q)_{L^2(G/K)}$.
Multiplying both sides of \eqref{EQ:obvious_step-5}, we get
\begin{equation}
		\left|
		(f,D_Q)_{L^2(G/K)}
		\right|
\leq
C
\frac
	{
		\nu_{\Gh_0}(Q)
	}
	{
		\lambda^{\frac{1}{p'}}_{\pi}
	}
	\|f\|_{L^p(G/K)}.
\end{equation}
The inverse H\"older inequality then implies that
\begin{equation}
\|D_Q\|_{L^{p'}(G/K)}
\leq
C
\frac
	{
		\nu_{\Gh_0}(Q)
	}
	{
		\lambda^{\frac{1}{p'}}_{\pi}
	}.
\end{equation}
Equivalently, we have
\begin{equation}
\label{EQ:obvious_step-6}
\frac
	{
		\lambda^{\frac{1}{p'}}_{\pi}
	}
	{
		\nu_{\Gh_0}(Q)
	}
\|D_Q\|_{L^{p'}(G/K)}
\leq 
C.
\end{equation}
Taking supremum in the left-hand side in \eqref{EQ:obvious_step-6}, we obtain
\begin{equation}
C_{p\M}
=
\sup_{\pi\in\Gh_0}
		\lambda^{\frac{1}{p'}}_{\pi}
\sup_{\substack{Q\in \M \\ \nu_{\Gh_0}(Q)\geq \lambda_{\pi}}}
\frac
	{
		\|D_Q\|_{L^{p'}(G/K)}
	}
	{
		\nu_{\Gh_0}(Q)
	}
\leq
C<+\infty,
\end{equation}
in view of the fact that  $(\pi,Q)$ is any pair satisfying $\mu(Q)\geq \lambda_{\pi}$  and $C$ is fixed and does not depend on $Q$ nor on $\pi$.

Now, it remains to establish \eqref{EQ:DK_norm_implies_Npq:step_2}.
Since the trace $\Tr$ is invariant under taking Hermittian conjugate $\pi \mapsto \pi^*$, up to complex conjugation i.e.
$$
\Tr\pi^*
=
\overline{\Tr\pi},
$$
we have
\begin{multline}
\label{EQ:conj_compl_herm}
(g,D_Q)_{L^2(G/K)}
=
\int\limits_{G/K}g(x)\overline{D_{Q}(x)}\,dx
=
\int\limits_{G/K}g(x)\overline{\sum\limits_{\pi\in Q}\dpi\Tr\pi(x)}\,dx
\\=
\int\limits_{G/K}g(x)\sum\limits_{\pi\in Q}\dpi\overline{\Tr\pi(x)}\,dx
=
\int\limits_{G/K}g(x)\sum\limits_{\pi\in Q}\dpi\Tr\pi^*(x)\,dx
\\=
\sum\limits_{\pi\in Q}
\int\limits_{G/K}g(x)\,\dpi\Tr\pi^*(x)\,dx,
\end{multline}
where $D_Q(x)=\sum\limits_{\pi\in Q}\dpi\Tr\pi(x)$.
Interchanging $\int\limits_{G/K}$ and $\Tr$ in the last line in \eqref{EQ:conj_compl_herm}, we get
\begin{equation}
(g,D_Q)_{L^2(G/K)}
=
\sum\limits_{\pi\in Q}
\dpi\Tr \int\limits_{G/K}g(x)\pi^*(x)\,dx
=
\sum\limits_{\pi\in Q}
\dpi\Tr\widehat{g}(\pi),
\end{equation}
where we used that the Fourier coefficients $\widehat{g}(\pi)$ are, by definition, equal to
\begin{equation}
\widehat{g}(\pi)
=
\int\limits_{G/K}
g(x)\pi^*(x)\,dx.
\end{equation}
This proves \eqref{EQ:DK_norm_implies_Npq:step_2}. 
This completes the proof.
\end{proof}


\section{On some properties of net spaces}
\label{SEC:netspaces}

We now formulate some assumptions which will allow us to establish interpolation theory of $N_{p,q}$ spaces which was needed in the proof of Theorem \ref{THM:HL-Npq-version}. In the case $\Gamma=\Gh$ or $\Gamma=\Gh_0$, these assumptions will be
satisfied.

\begin{assumption}\label{assumption_1} Suppose that a positive net $\{\lambda_{\pi}\}_{\pi\in\Gamma}$ is monotone increasing, i.e.
	\begin{equation}
\xi\prec \pi \quad \text{ if and only if} \quad \lambda_{\xi}\leq \lambda_{\pi}.
	\end{equation} 
\end{assumption}
\begin{assumption} Let $\beta\in \RR$ with $\beta\neq -1$. Suppose that the following formulae are true
\label{assumption_3}
\begin{eqnarray}
\label{EQ:assumption_3_1}
	\sum\limits_{
	\substack{
	\theta\in\Gamma 
	\\ 
	\lambda_{\theta}\leq \lambda_{\pi}
			}
		    }
	\lambda^{\beta}_{\theta}\kappa_{\theta}\delta_{\theta}
	=
	C_{\beta}
	\lambda_{\pi}^{\beta+1}\text{ for } \beta> -1,
\\
\label{EQ:assumption_3_2}
\sum\limits_{
	\substack{
	\theta\in\Gamma 
	\\ 
	\lambda_{\theta}\geq \lambda_{\pi}
			}
		    }
	\lambda^{\beta}_{\theta}\kappa_{\theta}\delta_{\theta}
	=
	C_{\beta}
	\lambda_{\pi}^{\beta+1}\text{ for } \beta<-1.
\end{eqnarray}
where $C_{\beta}$ is a constant depending on $\beta$.
\end{assumption}

\begin{rem}
\label{REM:Assumption_3_G}
Assumption \ref{assumption_3} is satisfied for $\Gamma=\Gh_0, \lambda_{\theta}=\jp\theta^n,\delta_{\theta}=d_{\theta},\kappa_{\theta}=k_{\theta},\,\theta\in\Gh_0$, where $\jp\theta$ are the eigenvalues of the first-order elliptic pseudo-differential operator $(I-\Delta_{G/K})^{\frac{1}2}$ on the compact manifold $G/K$ of dimension $n$, namely, we have
\begin{eqnarray}
\label{EQ:assumption_3_1-GK}
	\sum\limits_{
	\substack{
	\theta\in\Gh_0 
	\\ 
	\jp{\theta}\leq \jp{\pi}
			}
		    }
	\jp\theta^{\beta n}  k_{\theta} d_{\theta}
	\simeq
	\jp\pi^{(\beta+1) n}\text{ for } \beta> -1,
\\
\label{EQ:assumption_3_2-compact-case}
\sum\limits_{
	\substack{
	\theta\in\Gh_0 
	\\ 
	\jp{\theta}\geq \jp{\pi}
			}
		    }
	\jp\theta^{\beta n}  k_{\theta} d_{\theta}
	\simeq
	\jp\pi^{(\beta+1) n}\text{ for } \beta<-1.
\end{eqnarray}
This will be proved in Section \ref{APP1}.
We can also recall that if $K=\{e\}$ and hence $G/K=G$ is a compact Lie group, then
$\Gh_0=\Gh$ and
$k_\theta=d_\theta$ is the dimension of the representation $[\theta]\in\Gh$.
\end{rem}		
Now we can give a proof of Corollary \ref{COR:HL-converse-SU2}.
\begin{proof}[Proof of Corollary \ref{COR:HL-converse-SU2}]
For $G=\SU2$ and $\lambda_l=(2l+1)^3$, it is straightforward to check that Assumption \ref{assumption_3} holds true. Then by Theorem \ref{THM:HL-Npq-version}, we get
\begin{equation}
\label{EQ:COR:almost}
\|\widehat{f}\|_{N_{p',p}(\frac12\NN_0,\M_1)}
\leq
C_p
\|f\|_{L^p(\SU2)},\quad 1<p\leq 2.
\end{equation}
By Definition \ref{DEF:Npq-lattice} with $\Gamma=\widehat{\SU2},\delta_l=\kappa_l=(2l+1)$, the left-hand side in \eqref{EQ:COR:almost} is equal to
\begin{equation}
\sum\limits_{\xi\in\frac12\NN_0}(2\xi+1)^{\frac{5p}2-4}
\left(
\sup_{\substack{k\in\frac12\NN_0 \\ (2k+1) \geq (2\xi+1)}}
\frac1{(2k+1)^3}
\left|
\sum\limits_{\substack{l\in\frac12\NN_0 \\ 2l+1\leq 2k+1}}
(2l+1)\Tr\widehat{f}(l)
\right|
\right)^p.
\end{equation}
Thus, we have established inequality \eqref{EQ:HL-converse-SU2}.
This completes the proof.
\end{proof}

\begin{lem}\label{REM:mean_monot_decreasing}
	Suppose that Assumption \ref{assumption_1} holds true. Then
	the averaging $\overline{F}(\lambda_{\pi})$ of $F$ is a {\it monotone decreasing} net.
\end{lem}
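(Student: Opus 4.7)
The plan is to observe that this is essentially an immediate consequence of the fact that the supremum defining $\overline{F}[\lambda_\pi,\M]$ is taken over a family of admissible subsets $Q$ which shrinks as $\lambda_\pi$ grows. Assumption \ref{assumption_1} lets us translate the partial order on $\Gamma$ directly into an inequality on the values $\lambda_\pi$, and from there the monotonicity of the averaging with respect to $\prec$ is built into the definition.

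Concretely, suppose $\xi,\pi\in\Gamma$ with $\xi\prec\pi$. First I would invoke Assumption \ref{assumption_1} to conclude that $\lambda_\xi\leq\lambda_\pi$. Next I would consider the two families of admissible sets in the definition \eqref{DEF:averaging}, namely
\[
\mathcal{A}_\xi:=\{Q\in\M:\nu_\Gamma(Q)\geq\lambda_\xi\},\qquad \mathcal{A}_\pi:=\{Q\in\M:\nu_\Gamma(Q)\geq\lambda_\pi\},
\]
and observe that $\mathcal{A}_\pi\subseteq\mathcal{A}_\xi$, since any $Q$ satisfying $\nu_\Gamma(Q)\geq\lambda_\pi$ automatically satisfies $\nu_\Gamma(Q)\geq\lambda_\xi$. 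Then I would take the supremum of the same nonnegative quantity $\nu_\Gamma(Q)^{-1}|\sum_{\theta\in Q} d_\theta \Tr F(\theta)|$ over these two nested families and conclude
\[
\overline{F}[\lambda_\pi,\M]
=\sup_{Q\in\mathcal{A}_\pi}\frac{1}{\nu_\Gamma(Q)}\left|\sum_{\theta\in Q}d_\theta\Tr F(\theta)\right|
\leq\sup_{Q\in\mathcal{A}_\xi}\frac{1}{\nu_\Gamma(Q)}\left|\sum_{\theta\in Q}d_\theta\Tr F(\theta)\right|
=\overline{F}[\lambda_\xi,\M],
\]
which is the desired monotone decrease along $\prec$.

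There is essentially no obstacle here beyond unpacking the definitions; the only subtle point is to state the monotonicity in a way compatible with the partial order $\prec$ on $\Gamma$ rather than with a linear order, but since Assumption \ref{assumption_1} gives an ``if and only if'' correspondence between $\xi\prec\pi$ and $\lambda_\xi\leq\lambda_\pi$, the claim that $\overline{F}[\lambda_\pi,\M]$ is decreasing as a net over $(\Gamma,\prec)$ reduces to the scalar monotonicity of $\lambda\mapsto \sup_{\nu_\Gamma(Q)\geq \lambda}(\cdots)$, which is manifest from the shrinking constraint set.
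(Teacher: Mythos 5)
Your argument is correct and is essentially identical to the paper's own proof: both use Assumption \ref{assumption_1} to convert $\xi\prec\pi$ into $\lambda_\xi\leq\lambda_\pi$, note that the constraint set $\{Q\in\M:\nu_\Gamma(Q)\geq\lambda_\pi\}$ is contained in $\{Q\in\M:\nu_\Gamma(Q)\geq\lambda_\xi\}$, and conclude by monotonicity of the supremum over nested families. No issues.
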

\begin{proof}Let $\pi,\xi,\theta\in\Gamma$ and $\pi\succ \xi$. We will show that $\overline{F}(\lambda_{\pi})\leq \overline{F}(\lambda_{\xi})$.
Since $\lambda=\{\lambda_{\pi}\}_{\pi\in\Gamma}$ is a monotone increasing net, we have
$$
	\{
	Q\in\M 
	\colon
	\nu_\Gamma(Q)\geq \lambda_{\pi}
	\}
	\subset
	\{
	Q\in\M 
	\colon
	\nu_\Gamma(Q)\geq \lambda_{\xi}		
	\}.
$$
Therefore, we get
$$
\overline{F}(\lambda_{\pi})
=
\sup\limits_{\substack{
		Q\in \M \\ \nu_\Gamma(Q)\geq \lambda_{\pi}
		}}
		\frac1{\nu_\Gamma(Q)}
		\left|\sum\limits_{\theta\in Q}d_{\theta}\Tr F(\theta)\right|
\leq
\sup\limits_{\substack{
		Q\in \M \\ \nu_\Gamma(Q)\geq \lambda_{\xi}
	}}
	\frac1{\nu_\Gamma(Q)}
	\left|\sum\limits_{\theta\in Q}d_{\theta}\Tr F(\theta)\right|	
	=
\overline{F}(\lambda_{\xi}).	
$$
This completes the proof.
\end{proof}

\begin{thm}
	\label{THM:Net_Space_Embedd}
\begin{enumerate}
\item 
Let $1\leq p<\infty$, $1<q\leq\infty$ and let $\M_1\subset\M_2$ be two arbitrary fixed collections of finite subsets of $\Gamma$, then we have
\begin{equation}
\label{EQ:Net_space_Embedd_M}
N_{p,q}(\M_2)
\hookrightarrow
N_{p,q}(\M_1).
\end{equation}
\item 
Let $1\leq p <\infty$ and $1\leq q_1 \leq q_2 \leq \infty$. Suppose that Assumptions \ref{assumption_1} and \ref{assumption_3} hold true and  $\M$ is a fixed arbitrary collection of finite subsets of  $\Gamma$. Then we have the following embedding
\begin{equation}
\label{EQ:Net_Space_Embedd}
N_{p,q_1}(\Gamma,\M) \hookrightarrow N_{p,q_2}(\Gamma,\M).
\end{equation}
\end{enumerate}
\end{thm}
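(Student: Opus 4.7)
The plan is to dispose of part (1) as a one-line monotonicity observation and to treat part (2) by mimicking the classical Lorentz-space chain $L^{p,q_1}\hookrightarrow L^{p,q_2}$: first the endpoint case $q_2=\infty$, then the general case via a H\"older-type splitting.

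For (1), since $\M_1\subset\M_2$, the supremum defining $\overline{F}[\lambda_\pi,\M_1]$ in \eqref{DEF:averaging} is taken over a subfamily of that for $\overline{F}[\lambda_\pi,\M_2]$, so $\overline{F}[\lambda_\pi,\M_1]\leq \overline{F}[\lambda_\pi,\M_2]$ pointwise in $\pi$. Substituting this into Definition \ref{DEF:Npq-lattice} (a sum of nonnegative terms when $q<\infty$, a supremum when $q=\infty$) yields \eqref{EQ:Net_space_Embedd_M} with embedding constant one.

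For (2), I first establish the endpoint $\|F\|_{N_{p,\infty}(\Gamma,\M)}\lesssim \|F\|_{N_{p,q_1}(\Gamma,\M)}$. Fix $\pi\in\Gamma$. By Lemma \ref{REM:mean_monot_decreasing} the averaging $\overline{F}[\lambda_\xi]$ is monotone decreasing in $\xi$, so $\overline{F}[\lambda_\xi]\geq \overline{F}[\lambda_\pi]$ for every $\xi$ with $\lambda_\xi\leq\lambda_\pi$. Restricting the sum in the definition of $\|F\|_{N_{p,q_1}}^{q_1}$ to such $\xi$ and pulling out $\overline{F}[\lambda_\pi]^{q_1}$ reduces matters to evaluating
\[
\sum_{\lambda_\xi\leq \lambda_\pi}\lambda_\xi^{q_1/p-1}\delta_\xi\kappa_\xi.
\]
Assumption \ref{assumption_3} applies with $\beta=q_1/p-1$, which satisfies $\beta>-1$ since $q_1\geq 1$ and $p\geq 1$; formula \eqref{EQ:assumption_3_1} then gives $C_\beta\lambda_\pi^{q_1/p}$. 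Hence $\lambda_\pi^{1/p}\overline{F}[\lambda_\pi]\leq C_\beta^{-1/q_1}\|F\|_{N_{p,q_1}}$, and taking the supremum over $\pi$ yields the endpoint.

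For $1\leq q_1\leq q_2<\infty$, I simply split one factor from the sum defining the $N_{p,q_2}$-norm:
\[
\|F\|_{N_{p,q_2}}^{q_2}
=\sum_{\pi\in\Gamma}\bigl(\lambda_\pi^{1/p}\overline{F}[\lambda_\pi]\bigr)^{q_2-q_1}\bigl(\lambda_\pi^{1/p}\overline{F}[\lambda_\pi]\bigr)^{q_1}\frac{\delta_\pi\kappa_\pi}{\lambda_\pi}\leq \|F\|_{N_{p,\infty}}^{q_2-q_1}\|F\|_{N_{p,q_1}}^{q_1},
\]
and combining with the endpoint estimate just obtained delivers $\|F\|_{N_{p,q_2}}\leq C\|F\|_{N_{p,q_1}}$. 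The only (mild) subtlety is the applicability of Assumption \ref{assumption_3}, which is why monotonicity of $\{\lambda_\pi\}$ from Assumption \ref{assumption_1} and the total-order structure on $\Gamma$ are essential; I do not anticipate any substantial obstacle beyond this.
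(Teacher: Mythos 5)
Your proof is correct, and for the main case $1\leq q_1\leq q_2<\infty$ it takes a genuinely different and shorter route than the paper. Part (1) and your endpoint estimate $N_{p,q_1}\hookrightarrow N_{p,\infty}$ coincide with the paper's argument (restrict the sum to $\{\xi:\lambda_\xi\leq\lambda_\pi\}$, use the monotonicity of the averaging from Lemma \ref{REM:mean_monot_decreasing}, and apply \eqref{EQ:assumption_3_1} with $\beta=q_1/p-1>-1$). Where you diverge is the passage from the endpoint to finite $q_2$: you use the classical Lorentz-space factorization $\bigl(\lambda_\pi^{1/p}\overline{F}[\lambda_\pi]\bigr)^{q_2}=\bigl(\lambda_\pi^{1/p}\overline{F}[\lambda_\pi]\bigr)^{q_2-q_1}\bigl(\lambda_\pi^{1/p}\overline{F}[\lambda_\pi]\bigr)^{q_1}$ and bound the first factor by $\|F\|_{N_{p,\infty}}^{q_2-q_1}$, which immediately gives $\|F\|_{N_{p,q_2}}\leq\|F\|_{N_{p,\infty}}^{1-q_1/q_2}\|F\|_{N_{p,q_1}}^{q_1/q_2}\leq C\|F\|_{N_{p,q_1}}$. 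The paper instead runs a Hardy-type inequality: it rewrites $\lambda_\pi^{q_2/p}$ via \eqref{EQ:assumption_3_1} with an auxiliary large parameter $\eps$, then invokes $\ell^{\alpha}(\Gamma,\omega_\Gamma)$-duality with $\alpha=q_2/q_1$, Fubini and H\"older, and finally \eqref{EQ:assumption_3_2} with $\beta=-\eps q_2-1$ to resum. Your argument is more elementary, avoids the duality/Fubini machinery and the parameter $\eps$ entirely, and only uses the $\beta>-1$ half of Assumption \ref{assumption_3}, whereas the paper's route needs both halves; the trade-off is that the paper's Hardy-type computation is the kind of estimate that generalizes to off-diagonal situations, while your factorization is specific to embeddings with the same first index $p$ --- which is all that is claimed here. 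Both arguments correctly cover the remaining trivial cases $q_1=q_2$ and $q_2=\infty$.
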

\begin{proof}[Proof of Theorem \ref{THM:Net_Space_Embedd}]
First, we notice that \eqref{EQ:Net_space_Embedd_M} follows directly from the definition.
Therefore, we concentrate on proving the second part of the Theorem.

Let first $1\leq q_1 \leq q_2 <\infty$.
	By definition, we have
	\begin{equation}
	\label{EQ:Proof_Net_Space_Embed_aux_1}
		\|F\|_{N_{p,q_2}(\Gamma,\M)}
		=
		\left(
		\sum_{\pi\in\Gamma}
		\left(
		\lambda^{\frac{1}{p}}_{\pi}
		\overline{F}[\lambda_{\pi}]
		\right)^{q_2}\frac{\delta_{\pi}\kappa_{\pi}}{\lambda_{\pi}}
		\right)^{\frac1{q_2}}.		
	\end{equation}
 Using formula \eqref{EQ:assumption_3_1} from Assumption \ref{assumption_3} with $\beta=q_1(\frac1p+\eps)-1$, we get,
 with a sufficiently large $\eps>0$,
	$$
	\lambda_{\pi}^{\frac{q_2}{p}}
	=
	\lambda^{-\eps q_2}_{\pi}\lambda^{q_2(\frac1p+\eps)}_{\pi}
	=
	C^{\frac{q_2}{q_1}}_{\beta}
	\lambda^{-\eps q_2}_{\pi}
	\left(
	\sum\limits_
	{
		\substack
		{
			\theta\in\Gamma
			\\
			\lambda_{\theta}
			\leq 
			\lambda_{\pi}
		}
	}
	\lambda^{q_1(\frac{1}{p}+\eps)-1}_{\theta}\delta_{\theta}\kappa_{\theta}
	\right)^{\frac{q_2}{q_1}}.
	$$
Thus, up to a constant, the expression in \eqref{EQ:Proof_Net_Space_Embed_aux_1} equals to
\begin{equation}\label{EQ:aux-ns1}
	\left(
	\sum_{\pi\in\Gamma}
	\lambda_{\pi}^{-\varepsilon q_2}
	\left(\overline{F}[\lambda_{\pi}]\right)^{q_2}
	\left(
	\sum
	\limits_{
		\substack
		{
			\theta\in\Gamma
			\\
			\lambda_{\theta}\leq \lambda_{\pi}
		}
		}
			\lambda_{\theta}^{q_1(\frac1p+\eps)}
			\frac{\delta_{\theta}\kappa_{\theta}}{\lambda_{\theta}}
	\right)^{\frac{q_2}{q_1}}
	\frac{\delta_{\pi}\kappa_{\pi}}{\lambda_{\pi}}
	\right)^{\frac1{q_2}}.
\end{equation}
		
	In view of Lemma \ref{REM:mean_monot_decreasing} the averaging function 
	$\overline{F}(\lambda_{\pi})$ is a monotone decreasing net. Therefore, \eqref{EQ:aux-ns1} does not exceed
$$
	\left[
		\left(
		\sum_{\pi\in\Gamma}
		\lambda_{\pi}^{-\varepsilon q_2}
		\left(
		\sum
		\limits_{
			\substack
			{
				\theta\in\Gamma
				\\
				\lambda_{\theta}\leq \lambda_{\pi}
			}
		}
		\lambda_{\theta}^{q_1(\frac1p+\eps)}
		\overline{F}[\lambda_{\theta}]^{q_1}
		\frac{\delta_{\theta}\kappa_{\theta}}{\lambda_{\theta}}
		\right)^{\alpha}
		\frac{\delta_{\pi}\kappa_{\pi}}{\lambda_{\pi}}
		\right)^{\frac{1}{\alpha}}
		\right]^{\frac1{q_1}},
$$
	where $\alpha=\frac{q_2}{q_1}$.
We have thus proved that
\begin{equation}
\label{EQ:Proof_Net_Space_Embedd_aux_2}
\|F\|_{N_{p,q_2}(\Gamma,\M)}
\leq
C^{\frac1{q_1}}_{\beta}
	\left[
	\left(
	\sum_{\pi\in\Gamma}
	\lambda_{\pi}^{-\varepsilon q_2}
	\left(
	\sum
	\limits_{
		\substack
		{
			\theta\in\Gamma
			\\
			\lambda_{\theta}\leq \lambda_{\pi}
		}
	}
	\lambda_{\theta}^{q_1(\frac1p+\eps)}
	\overline{F}[\lambda_{\theta}]^{q_1}
	\frac{\delta_{\theta}\kappa_{\theta}}{\lambda_{\theta}}
	\right)^{\alpha}
	\frac{\delta_{\pi}\kappa_{\pi}}{\lambda_{\pi}}
	\right)^{\frac{1}{\alpha}}
	\right]^{\frac1{q_1}}.	
\end{equation}
	Now, we consider an $\ell^{\alpha}(\Gamma,\omega)$ space with the measure $\omega$ defined as follows
	$$
	\omega_{\Gamma}(\pi)=\lambda_{\pi}^{-\eps q_2-1}\delta_{\pi}\kappa_{\pi}.
	$$
	Denote  by $I$ the right-hand side in \eqref{EQ:Proof_Net_Space_Embedd_aux_2}.  Then
	using the $\ell^{\alpha}(\Gamma,\omega_{\Gamma})$-space duality, 
	we have
	$$
	I^{q_1}=
	\sup_{\substack{b\in\ell^{\alpha'}(\Gamma,\omega_{\Gamma})\\ \|b\|_{\ell^{\alpha'}}=1}}
	\left|
		\sum_{\pi\in\Gamma}
		\left(
		\sum
		\limits_{
			\substack
			{
				\theta\in\Gamma
				\\
				\lambda_{\theta}\leq \lambda_{\theta}
			}
		}
		\overline{F}[\lambda_{\theta}]^{q_1}
\lambda^{q_1(\frac1p+\eps)-1}_{\theta}\delta_{\theta}\kappa_{\theta}
		\right)
		\overline{b_{\pi}}
\omega_{\Gamma}(\pi)
\right|,
	$$
where $\alpha'$ denotes the exponent conjugate to $\alpha$ so that  $\frac1{\alpha}+\frac1{\alpha'}=1$.
Then using Fubini theorem and H\"older inequality, we have
\begin{multline*}
\left|
		\sum_{\pi\in\Gamma}
		\left(
		\sum
		\limits_{
			\substack
			{
				\theta\in\Gamma
				\\
				\lambda_{\theta}\leq \lambda_{\pi}
			}
		}
		\overline{F}[\lambda_{\theta}]^{q_1}
\lambda^{q_1(\frac1p+\eps)-1}_{\theta}\delta_{\theta}\kappa_{\theta}
		\right)
		\overline{b_{\pi}}
		\omega_{\Gamma}(\pi)				
	\right|
\\ \leq
	\sum_{\pi\in\Gamma}
	\left(
	\sum
	\limits_{
		\substack
		{
			\theta\in\Gamma
			\\
			\lambda_{\theta}\leq \lambda_{\pi}
		}
	}
\overline{F}[\lambda_{\theta}]^{q_1}
\lambda^{q_1(\frac1p+\eps)-1}_{\theta}\delta_{\theta}\kappa_{\theta}
	\right)
	\left|
	b_{\pi}
	\right|
	\omega_{\Gamma}(\pi)
\\ =
	\sum_{\theta\in\Gamma}
	\overline{F}[\lambda_{\theta}]^{q_1}
	\lambda^{q_1(\frac1p+\eps)-1}_{\theta}\delta_{\theta}\kappa_{\theta}	
	\sum
	\limits_{
		\substack
		{
			\pi\in\Gamma
			\\
			\lambda_{\pi}\geq \lambda_{\theta}
		}
	}	
	\left|
	b_{\pi}
	\right|
	\omega_{\Gamma}(\pi)
\\
\leq
	\sum_{\theta\in\Gamma}
\overline{F}[\lambda_{\theta}]^{q_1}
\lambda^{q_1(\frac1p+\eps)-1}_{\theta}\delta_{\theta}\kappa_{\theta}
	\left(
	\sum
	\limits_{
		\substack
		{
			\pi\in\Gamma
			\\
			\lambda_{\pi}\geq \lambda_{\theta}
		}
	}	
	\omega_{\Gamma}(\pi)
	\right)^{\frac1{\alpha}}
\left(
	\sum
	\limits_{
		\substack
		{
			\pi\in\Gamma
			\\
			\lambda_{\pi}\geq \lambda_{\theta}
		}
	}		
\left|
b_{\pi}
\right|^{\alpha'}
\omega_{\Gamma}(\pi)
\right)^{\frac1{\alpha'}}
\\\leq
\left[
	\sum_{\theta\in\Gamma}
	\overline{F}[\lambda_{\theta}]^{q_1}
	\left(
	\sum
	\limits_{
		\substack
		{
			\pi\in\Gamma
			\\
			\lambda_{\pi}\geq \lambda_{\theta}
		}
	}	
	\omega_{\Gamma}(\pi)
	\right)^{\frac1{\alpha}}
\lambda^{q_1(\frac1p+\eps)-1}_{\theta}\delta_{\theta}\kappa_{\theta}
	\right]\cdot
	\|b\|_{\ell^{\alpha'}(\Gamma,\omega_{\Gamma})}.					
\end{multline*}
Thus, using this and taking supremum over all $b\in \ell^{\alpha'}(\Gamma,\omega_{\Gamma})$, we get from \eqref{EQ:Proof_Net_Space_Embedd_aux_2} 
$$
\|F\|_{N_{p,q_2(\Gamma,\M)}}
\leq
C^{\frac1{q_1}}_{\beta}
\left(
	I^{q_1}
	\right)^{\frac1{q_1}}
\leq
C^{\frac1{q_1}}_{\beta}
	\left[
	\sum_{\theta\in\Gamma}
	\overline{F}[\lambda_{\theta}]^{q_1}
	\left(
	\sum
	\limits_{
		\substack
		{
			\pi\in\Gamma
			\\
			\lambda_{\pi}\geq \lambda_{\theta}
		}
	}	
	\omega_{\Gamma}(\pi)
	\right)^{\frac1{\alpha}}
\lambda^{q_1(\frac1p+\eps)-1}_{\theta}\delta_{\theta}\kappa_{\theta}
	\right]^{\frac1{q_1}}.
$$
Again, using formula \eqref{EQ:assumption_3_2} with $\beta=-\eps q_2-1$ from Assumption \ref{assumption_3} and recalling 
that $\alpha=\frac{q_2}{q_1}$, we get
$$
\left(
		\sum
		\limits_{
			\substack
			{
				\pi\in\Gamma
				\\
				\lambda_{\pi}\geq \lambda_{\theta}
			}
		}	
		\omega_{\Gamma}(\pi)
		\right)^{\frac1{\alpha}}
=
\left(
\sum
\limits_{
	\substack
	{
		\pi\in\Gamma
		\\
		\lambda_{\pi}\geq \lambda_{\theta}
	}
}	
\lambda^{-\eps q_2-1}_{\pi}\delta_{\pi}\kappa_{\pi}
\right)^{\frac{q_1}{q_2}}	
=
C^{\frac{q_1}{q_2}}_{\beta}
\lambda^{-\eps q_1}_{\theta},	
$$
for sufficiently large $\eps$.
Finally, we have
\begin{multline*}
\|F\|_{N_{p,q_2}(\Gamma,\M)}
\leq
C^{\frac{2}{q_1}}_{\beta}
\left[
\sum_{\theta\in\Gamma}
\overline{F}[\lambda_{\theta}]^{q_1}
\lambda^{-\eps q_1}_{\theta}
\lambda^{q_1(\frac1p+\eps)-1}\delta_{\theta}\kappa_{\theta}
\right]^{\frac1{q_1}}
\\=
C^{\frac{2}{q_1}}_{\beta}
\left[
\sum_{\theta\in\Gamma}
\left(
\lambda^{\frac1p}_{\pi}
\overline{F}[\lambda_{\theta}]
\right)
^{q_1}
\frac{\delta_{\theta}\kappa_{\theta}}{\lambda_{\theta}}
\right]^{\frac1{q_1}}
=
C^{\frac{2}{q_1}}_{\beta}
\|F\|_{N_{p,q_1}(\Gamma,\M)}.
\end{multline*}
For $q_2=\infty$ and $q_1<q_2=\infty$, using Assumption \ref{assumption_3}, we have
\begin{multline*}
\|F\|_{N_{p,\infty}(\Gamma,\M)}
=
\sup_{\pi\in\Gamma}
\lambda^{\frac1p}_{\pi}
\overline{F}[\lambda_{\pi}]
=
C^{\frac1{q_1}}_{\frac{q_1}p-1}
\sup_{\pi\in\Gamma}
\left(
\sum\limits_{\substack{\theta\in\Gamma \\ \lambda_{\theta}\leq\lambda_{\pi}}}
\lambda^{\frac{q_1}{p}}_{\theta}
\overline{F}[\lambda_{\pi}]^{q_1}
\frac{\kappa_{\theta}\delta_{\theta}}{\lambda_{\theta}}
\right)^{\frac1{q_1}}
\\\lesssim
\sup_{\pi\in\Gamma}
\left(
\sum\limits_{\substack{\theta\in\Gamma \\ \lambda_{\theta}\leq\lambda_{\pi}}}
\lambda^{\frac{q_1}{p}}_{\theta}
\overline{F}[\lambda_{\theta}]^{q_1}
\frac{\kappa_{\theta}\delta_{\theta}}{\lambda_{\theta}}
\right)^{\frac1{q_1}}
\leq
\left(
\sum\limits_{\substack{\theta\in\Gamma }}
\lambda^{\frac{q_1}{p}}_{\theta}
\overline{F}[\lambda_{\theta}]^{q_1}
\frac{\kappa_{\theta}\delta_{\theta}}{\lambda_{\theta}}
\right)^{\frac1{q_1}}=\|F\|_{N_{p,q_1(\Gamma,\M)}},
\end{multline*}
where in the first inequality we used the monotonicity of the averaging function $\overline{F}$ established in Lemma \ref{REM:mean_monot_decreasing}.
This completes the proof.
\end{proof}

We now establish interpolation properties of net spaces.
Let $(A_0,A_1)$ be a compatible pair of Banach spaces (cf. \cite{BL2011}) and let
$$
K(t,F;A_0,A_1):=
\inf_{F=F_0+F_1}
\left(
\|F_0\|_{A_0}+t\|F_1\|_{A_1}
\right),\quad  F\in A_0+A_1,
$$
be the Peetre functional.
\begin{thm}
	\label{net_space_interpolation}
	Let $1\leq p_1 < p_2 <\infty,\,1\leq q_1,q_2,q \leq \infty,\,0<\theta<1$.
	Suppose that Assumptions \ref{assumption_1} and \ref{assumption_3} hold true.
	 Then we have
	\begin{equation}
	\label{EQ:Npq-interpolation}
	(N_{p_1,q_1}(\Gamma,\M),N_{p_2,q_2}(\Gamma,\M))_{\theta,q}\hookrightarrow N_{p,q}(\Gamma,\M),
	\end{equation}
	where $\frac1p=\frac{1-\theta}{p_1}+\frac{\theta}{p_2}$.
\end{thm}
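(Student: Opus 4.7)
The plan is to apply the real $K$-method of interpolation. Given $F$ in the interpolation space, I would first derive the pointwise estimate
\begin{equation*}
\lambda_\pi^{1/p}\,\overline{F}[\lambda_\pi,\M]\lesssim t_\pi^{-\theta}\,K(t_\pi,F),\qquad t_\pi:=\lambda_\pi^{1/p_1-1/p_2},
\end{equation*}
where $K(t,F)=K(t,F;N_{p_1,q_1}(\Gamma,\M),N_{p_2,q_2}(\Gamma,\M))$, and then compare the resulting weighted sum over $\Gamma$ with the integral $\int_0^\infty(t^{-\theta}K(t,F))^q\,dt/t$ defining the real interpolation norm.

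The pointwise bound rests on two observations. First, the averaging $F\mapsto\overline{F}[\lambda_\pi,\M]$ is subadditive by the triangle inequality inside the supremum in \eqref{DEF:averaging}, so $\overline{F}[\lambda_\pi,\M]\leq\overline{F_0}[\lambda_\pi,\M]+\overline{F_1}[\lambda_\pi,\M]$ for any decomposition $F=F_0+F_1$. Second, the embedding $N_{p_i,q_i}\hookrightarrow N_{p_i,\infty}$ from Theorem \ref{THM:Net_Space_Embedd}(2), together with the definition of the $N_{p_i,\infty}$-norm, gives $\lambda_\pi^{1/p_i}\overline{F_i}[\lambda_\pi,\M]\lesssim\|F_i\|_{N_{p_i,q_i}(\Gamma,\M)}$. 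Multiplying the subadditivity estimate by $\lambda_\pi^{1/p_1}$ yields
\begin{equation*}
\lambda_\pi^{1/p_1}\,\overline{F}[\lambda_\pi,\M]\lesssim\|F_0\|_{N_{p_1,q_1}(\Gamma,\M)}+t_\pi\,\|F_1\|_{N_{p_2,q_2}(\Gamma,\M)},
\end{equation*}
and taking the infimum over decompositions produces $K(t_\pi,F)$ on the right. Multiplying by $\lambda_\pi^{1/p-1/p_1}=t_\pi^{-\theta}$ --- an exact identity because $\tfrac1p-\tfrac1{p_1}=-\theta\bigl(\tfrac1{p_1}-\tfrac1{p_2}\bigr)$ --- completes the pointwise inequality. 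Raising to the $q$-th power and integrating against $\delta_\pi\kappa_\pi/\lambda_\pi$ then reduces the theorem to the estimate
\begin{equation*}
\sum_{\pi\in\Gamma}\bigl(t_\pi^{-\theta}K(t_\pi,F)\bigr)^q\frac{\delta_\pi\kappa_\pi}{\lambda_\pi}\lesssim \int_0^\infty\bigl(t^{-\theta}K(t,F)\bigr)^q\frac{dt}{t}.
\end{equation*}

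The main obstacle is this discrete-to-continuous comparison. I would partition $\Gamma$ into dyadic blocks $\Gamma_k:=\{\pi:2^k\leq\lambda_\pi<2^{k+1}\}$ and exploit the classical monotonicity of the Peetre $K$-functional --- $K(\cdot,F)$ nondecreasing and $K(\cdot,F)/t$ nonincreasing --- to conclude that $t^{-\theta}K(t,F)$ oscillates by only a bounded factor across each dyadic block. Assumption \ref{assumption_3} with $\beta=0$ supplies the density bound $\sum_{\pi\in\Gamma_k}\delta_\pi\kappa_\pi\simeq 2^k$, hence $\sum_{\pi\in\Gamma_k}\delta_\pi\kappa_\pi/\lambda_\pi\simeq 1$, so each block sum is comparable to $(t_k^{-\theta}K(t_k,F))^q$ at $t_k:=2^{k(1/p_1-1/p_2)}$; summing in $k$ and changing variables $t=\lambda^{1/p_1-1/p_2}$ identifies the resulting dyadic sum with the interpolation integral up to constants. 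The endpoint cases $q=\infty$ or $q_i=\infty$ follow from the same pointwise bound by replacing sums with suprema. The delicate point is precisely this last step, because Assumption \ref{assumption_3} excludes the critical exponent $\beta=-1$ that would naïvely correspond to the weight $\lambda_\pi^{-1}$, forcing one to proceed indirectly via the $\beta=0$ density estimate combined with the dyadic regularity of $K$.
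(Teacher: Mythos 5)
Your proposal is correct and follows essentially the same route as the paper's proof: reduction to the weak-type spaces $N_{p_i,\infty}$ via Theorem \ref{THM:Net_Space_Embedd}, subadditivity of the averaging \eqref{DEF:averaging} to obtain the pointwise bound $\lambda_\pi^{1/p}\overline{F}[\lambda_\pi,\M]\lesssim t_\pi^{-\theta}K(t_\pi,F)$, and then a dyadic decomposition in $\lambda_\pi$ combined with Assumption \ref{assumption_3} at $\beta=0$ (giving $\sum_{2^k\leq\lambda_\pi<2^{k+1}}\delta_\pi\kappa_\pi/\lambda_\pi\simeq 1$) to compare the weighted sum with $\int_0^\infty(t^{-\theta}K(t,F))^q\,dt/t$. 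The only cosmetic difference is that you work pointwise at each $\pi$ and invoke the standard monotonicity of the Peetre functional on dyadic blocks, whereas the paper takes a supremum over $\{\lambda_\xi\leq v(t)\}$ first and uses the monotonicity of the averaging (Lemma \ref{REM:mean_monot_decreasing}) to extract the block terms; both yield the same estimate.
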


\begin{proof}[Proof of Theorem \ref{net_space_interpolation}]

	Since by Theorem \ref{THM:Net_Space_Embedd} we have
	$N_{p_i,q_i}\hookrightarrow N_{p_i,\infty}$, $i=1,2$, it is sufficient to prove
	$$
	(N_{p_1,\infty},N_{p_2,\infty})_{\theta,q}\hookrightarrow N_{p,q}.
	$$
	Assume first that $q<+\infty$.
	Let $\pi\in\Gamma, F=F_1+F_2, F_1\in N_{p_1,\infty}$ and $F_2\in N_{p_2,\infty}$. It is clear that
	$$
	\overline{F}[\lambda_{\pi}] \leq \overline{F_1}[\lambda_{\pi}] +\overline{F_2}[\lambda_{\pi}].
	$$
	Denote $v(t):=t^{\frac1{\frac1{p_1}-\frac1{p_2}}}$, $t>0$. It is obvious that
	\begin{multline*}
	\sup\limits_{\substack{\xi\in\Gamma\\ \lambda_{\xi} \leq v(t)}}\lambda_{\xi}^{\frac{1}{p_1}}\overline{F}[\lambda_{\xi}]
	\leq
	\sup\limits_{\substack{\xi\in\Gamma}}\lambda_{\xi}^{\frac{1}{p_1}}\overline{F_1}[\lambda_{\xi}]
	+
	\sup\limits_{\substack{\xi\in\Gamma\\ \lambda_{\xi} \leq v(t)}}\lambda_{\xi}^{\frac{1}{p_1}-\frac1{p_2}+\frac1{p_2}}\overline{F_2}[\lambda_{\xi}]
	\\\leq
	\sup\limits_{\substack{\xi\in\Gamma}}\lambda_{\xi}^{\frac{1}{p_1}}\overline{F_1}[\lambda_{\xi}]
	+
	t
	\sup\limits_{\substack{\xi\in\Gamma}}\lambda_{\xi}^{\frac{1}{p_2}}\overline{F_2}[\lambda_{\xi}].
	\end{multline*}
	Taking the infimum over all possible reprsentations $F=F_1+F_2$, we obtain
	\begin{equation}
	\label{aux_petre_estimate}
	\sup\limits_{\substack{\xi\in\Gamma\\ \lambda_{\xi} \leq v(t)}}\lambda_{\xi}^{\frac{1}{p_1}}\overline{F}[\lambda_{\xi}]
	\lesssim
	K(t,F;N_{p_1,\infty},N_{p_2,\infty}).
	\end{equation}

Thus, making a substitution $t \rightarrow t^{\frac1{p_1}-\frac1{p_2}}$, we have
\begin{multline}
\label{EQ:interm-1}
	\int\limits^{+\infty}_{0}\left(t^{-\theta}K(t,F)\right)^q\frac{dt}{t}
\geq
\int\limits^{+\infty}_0
\left(
t^{-\theta}\sup_{\substack{\pi\in\Gamma \\ \lambda_{\pi}\leq v(t)}}\lambda_{\pi}^{\frac1{p_1}}\overline{F}[\lambda_{\pi}]
\right)^{q}\frac{dt}{t}
\\\cong
\int\limits^{+\infty}_{0}\left(t^{-\theta(\frac1{p_1}-\frac1{p_2})}\sup_{\substack{\pi\in\Gamma \\ \lambda_{\pi}\leq t}}\lambda_{\pi}^{\frac1{p_1}}\overline{F}[\lambda_{\pi}]\right)^q\frac{dt}{t}.
\end{multline}
Decomposing
$$
	(0,+\infty)=\bigsqcup_{s\in\ZZ}[2^s,2^{s+1}),
$$
we have
\begin{multline}
\label{EQ:net_space_interpolation_aux_1}
\int\limits^{+\infty}_{0}
\left(
t^{-\theta(\frac1{p_1}-\frac1{p_2})}
\sup_{\substack{\pi\in\Gamma \\ \lambda_{\pi}\leq t}}
\lambda_{\pi}^{\frac1{p_1}}\overline{F}[\lambda_{\pi}]
\right)^q
\frac{dt}{t}
=
\sum_{s\in\ZZ}
\int\limits^{2^{s+1}}_{2^s}
\left(
t^{-\theta(\frac1{p_1}-\frac1{p_2})}
\sup_{\substack{\pi\in\Gamma \\ \lambda_{\pi}\leq t}}
\lambda_{\pi}^{\frac1{p_1}}\overline{F}[\lambda_{\pi}]
\right)^q
\frac{dt}{t}
\\\cong
\sum_{s\in\ZZ}
\left(
2^{-s\theta(\frac1{p_1}-\frac1{p_2})}
\sup_{\substack{\pi\in\Gamma \\ \lambda_{\pi}\leq 2^s}}
\lambda_{\pi}^{\frac1{p_1}}\overline{F}[\lambda_{\pi}]
\right)^q
\int\limits^{2^{s+1}}_{2^s}\frac{dt}{t}
\cong
\sum_{s\in\ZZ}
\left(
2^{-s\theta(\frac1{p_1}-\frac1{p_2})}
\sup_{\substack{\pi\in\Gamma \\ \lambda_{\pi}\leq 2^s}}
\lambda_{\pi}^{\frac1{p_1}}\overline{F}[\lambda_{\pi}]
\right)^q
\\\geq
\sum_{s\in\ZZ}
\left(
2^{-s\theta(\frac1{p_1}-\frac1{p_2})}
\sup_{\substack{\pi\in\Gamma \\ 2^{s-1}\leq\lambda_{\pi}\leq 2^s}}
\lambda_{\pi}^{\frac1{p_1}}\overline{F}[\lambda_{\pi}]
\right)^q
\geq
\sum_{s\in\ZZ}
\left(
2^{-s\theta(\frac1{p_1}-\frac1{p_2})}
2^{(s-1)\frac1{p_1}}\overline{F}[2^{s-1}]
\right)^q.
\end{multline}
Using formulae \eqref{EQ:assumption_3_1} with $\beta=0$ from Assumption \ref{assumption_3}, we get
\begin{equation}
\label{EQ:assumption-alpha-0}
	\sum\limits_{\substack{\theta\in\Gamma \\ 2^s\leq \lambda_{\theta}\leq 2^{s+1}}}
		\frac{\delta_{\theta}\kappa_{\theta}}{\lambda_{\theta}}
		\cong
		\frac1{2^s}
		\left(
	\sum\limits_{
		\substack
		{\theta\in\Gamma \\ \lambda_{\theta}\leq 2^{s+1}}
	}
	\delta_{\theta}\kappa_{\theta}
	-
	\sum\limits_{
		\substack
		{\theta\in\Gamma \\ \lambda_{\theta}\leq 2^{s}}
	}
	\delta_{\theta}\kappa_{\theta}
	\right)
		\cong
		\frac{
		2^{s+1}-2^s
	}{2^s}
		=1.
\end{equation}

Therefore recalling that $\frac1p=\frac{1-\theta}{p_1}+\frac{\theta}{p_2}$ and combining formulae \eqref{EQ:interm-1}, \eqref{EQ:net_space_interpolation_aux_1}, \eqref{EQ:assumption-alpha-0}, we have 
\begin{multline}
	\int\limits^{+\infty}_{0}\left(t^{-\theta}K(t,F)\right)^q\frac{dt}{t}
\geq
\sum_{s\in\ZZ}
\left(
2^{\frac{s}p}
\overline{F}[2^s]
\right)^q
	\sum\limits_{\substack{\theta\in\Gamma \\ 2^s\leq \lambda_{\theta}\leq 2^{s+1}}}
		\frac{\delta_{\theta}\kappa_{\theta}}{\lambda_{\theta}}
\\
\cong
\sum_{s\in\ZZ}
\left(
2^{\frac{s+1}p}
\overline{F}[2^s]
\right)^q
	\sum\limits_{\substack{\theta\in\Gamma \\ 2^s\leq \lambda_{\theta}\leq 2^{s+1}}}
		\frac{\delta_{\theta}\kappa_{\theta}}{\lambda_{\theta}}
\geq
\sum_{s\in\ZZ}
	\sum\limits_{\substack{\theta\in\Gamma \\ 2^s\leq \lambda_{\theta}\leq 2^{s+1}}}
\left(
\lambda^{\frac1p}_{\theta}
\overline{F}[\lambda_{\theta}]
\right)^q
\frac{\delta_{\theta}\kappa_{\theta}}{\lambda_{\theta}}
\\=
\sum\limits_{\substack{\theta\in\Gamma}}
\left(
\lambda^{\frac1p}_{\theta}
\overline{F}[\lambda_{\theta}]
\right)^q
\frac{\delta_{\theta}\kappa_{\theta}}{\lambda_{\theta}}
=\|F\|_{N_{p,q}(\Gamma,\M)}.
\end{multline}
where in the last inequality we used that $\overline{F}(t)$ is a decreasing function of $t$. This proves \eqref{EQ:Npq-interpolation} for $q<+\infty$.
For $	q=\infty$, making again substitution $t\mapsto t^{\frac1{p_1}-\frac1{p_2}}$, we have
\begin{equation}
\label{EQ:q-inf}
\sup_{t>0}t^{-\theta}K(t,F)
=
\sup_{t>0}t^{-\theta(\frac1{p_1}-\frac1{p_2})}K(t^{\frac1{p_1}-\frac1{p_2}},F).
\end{equation}
Then using formula \eqref{aux_petre_estimate}, we continue inequality \eqref{EQ:q-inf} as
\begin{multline}
\geq
\sup_{t>0}t^{-\theta(\frac1{p_1}-\frac1{p_2})}\sup_{\substack{\pi\in\Gamma \\ \lambda_{\pi}\leq t}}\lambda^{\frac1{p_1}}_{\pi}\overline{F}[\lambda_{\pi}]
\gtrsim
\lambda^{-\theta(\frac1{p_1}-\frac1{p_2})}_{\xi}
\sup_{\substack{\pi\in\Gamma \\ \lambda_{\pi}\leq \lambda_{\xi}}}\lambda^{\frac1{p_1}}_{\pi}\overline{F}[\lambda_{\pi}]
\\\geq
\lambda^{-\theta(\frac1{p_1}-\frac1{p_2})}_{\xi}
\lambda^{\frac1{p_1}}_{\xi}\overline{F}[\lambda_{\xi}]
=
\lambda^{\frac1p}_{\xi}\overline{F}[\lambda_{\xi}]
,
\end{multline}
here $\lambda_{\xi}$ is an arbitrary element of the net $\lambda=\{\lambda_{\xi}\}_{\xi\in\Gamma}$.
Taking thus supremum over all $\xi\in\Gamma$, we finally obtain
\begin{equation}
\sup_{t>0}
t^{-\theta}K(t,F)
\gtrsim
\sup_{\xi\in\Gamma}
\lambda^{\frac1p}_{\xi}\overline{F}[\lambda_{\xi}]
=\|F\|_{N_{p,\infty}(\Gamma,\M)}.
\end{equation}
We have established embedding \eqref{EQ:Npq-interpolation} also for $q=\infty$. This completes the proof.
\end{proof}

\section{Proof of Remark \ref{REM:Assumption_3_G}}
\label{APP1}

Here we prove Remark \ref{REM:Assumption_3_G}.
\smallskip

We denote by $s_k$ and $m_k$ the enumerated eigenvalues of $(1-\Delta_{G/K})^{\frac12}$ and $(1-\Delta_{G/K})^{\frac{n}2}$ respectively.  We assume that they are ordered, i.e.
\begin{eqnarray}
s_1\leq s_2 \leq\ldots\leq s_{k}\leq s_{k+1} \leq \ldots,\nonumber \\
\label{PROOF:Assumption_3_G:1}
m_1\leq m_2\leq\ldots\leq m_{k}\leq m_{k+1} \leq \ldots,
\end{eqnarray}
multiplicities taken into account.
Let us also denote by $N_k(L)$ the eigenvalue counting function for the eigenvalues $\lambda^k=\{\lambda^k_i\}_{i\in\NN}$ of the $k$-th order operator $(I-\Delta_{G/K})^{\frac{k}2}$, i.e.
$$
N_k(L)=\sum\limits_{\substack{i\in\NN\\ \lambda^k_i\leq L}}  1.
$$
First, we show that
\begin{equation}
\label{PROOF:Assumption_3_G:eigenvalue_asymp}
		m_k \cong k.
\end{equation}
We use the Weyl counting function asymptotics for the first order elliptic pseudo-differential operator $(1-\Delta_{G/K})^{\frac12}$ on the compact homogeneous manifold $G/K$, to get that the eigenvalue counting function $N_1(L)$ of eigenvalues of $(1-\Delta_{G/K})^{\frac12}$ counted with multiplicities (see e.g. \cite{Shubin:BK-1987}), as
$$
	N_1(L) = \sum_{i\colon s_i \leq L} 1 \cong L^{n}.
$$
Then we get the following asymptotic for the $n$-th order elliptic pseudo-differential operator $A=(I-\Delta_{G/K})^{\frac{n}2}$
\begin{equation}
\label{PROOF:Assumption_3_G:2}
N_n(L)=
 \sum_{i\colon m_i \leq L} 1
 \cong L. 
\end{equation}
Now, we fix an arbitrary eigenvalue $m_k$ and set $L=m_k$ in \eqref{PROOF:Assumption_3_G:2} to get
\begin{equation}
\label{PROOF:Assumption_3_G:3}
	N_n(m_k) \cong m_k.
\end{equation}
Since \eqref{PROOF:Assumption_3_G:1}, we have
\begin{equation}
\label{PROOF:Assumption_3_G:4}
	N_n(m_k) = k.
\end{equation}
Combining \eqref{PROOF:Assumption_3_G:3} and \eqref{PROOF:Assumption_3_G:4}, we get  \eqref{PROOF:Assumption_3_G:eigenvalue_asymp}.
Further, we fix an arbitrary $\pi\in\Gh_0$ coresponding to a $\mu_{k_0}$ such that $\jp\pi^n=\mu_{k_0}$. Then using \eqref{PROOF:Assumption_3_G:eigenvalue_asymp}, we get
\begin{equation*}
	\sum\limits_{\substack{\xi\in\Gh_0\\\jp\xi\leq\jp\pi}}\jp\xi^{n\beta}
=
	\sum\limits_{k\colon \mu_k \leq \mu_{k_0}}\mu^{\beta}_k
\cong
	\sum\limits^{k_0}_{k=1}k^{\beta}
\cong
k^{\beta+1}_0
\cong
\mu^{\beta+1}_{k_0}
=
\jp\pi^{n(\beta+1)}.
\end{equation*}

This proves  \eqref{EQ:assumption_3_2-compact-case} with $\Gamma=\Gh_0, \lambda_{\theta}=\jp\theta^n,\delta_{\theta}=d_{\theta},\kappa_{\theta}=k_{\theta},\,\theta\in\Gh_0$.


\end{document}